\documentclass[12pt, a4paper]{article}

%%%%%%%%%%%%%%%%%%%%%%%%%%%%%%%%%%%%%%%%%%%%%%%%%%%%%%%%%%%%%%%%%%%%%%%%%%%%%%%%%%%%%%%%%%%%%%%%%%% Packages

\usepackage[latin1]{inputenc}
\usepackage[english]{babel}
\usepackage{mathtext,amssymb,amsmath}
\usepackage{amsthm,amsfonts,mathrsfs}
\usepackage{color,graphicx}
\usepackage{psfrag}
\usepackage{float}

%%%%%%%%%%%%%%%%%%%%%%%%%%%%%%%%%%%%%%%%%%%%%%%%%%%%%%%%%%%%%%%%%%%%%%%%%%%%%%%%%%%%%%%%%%%%%%%%%%% Authors and Title

\author{F\'abio S. Souza and Paul A. Schweitzer, S.J.}
\title{Manifolds that are not leaves of codimension one foliations}

%%%%%%%%%%%%%%%%%%%%%%%%%%%%%%%%%%%%%%%%%%%%%%%%%%%%%%%%%%%%%%%%%%%%%%%%%%%%%%%%%%%%%%%%%%%%%%%%%%%
\newtheorem{theo}{Theorem}[section]
\newtheorem{exe}[theo]{Example}

\newtheorem{defi}[theo]{Definition}

\newtheorem{prop}[theo]{Proposition}

\newtheorem{lemma}[theo]{Lemma}

\newtheorem*{theoA}{Theorem A}
\newtheorem*{theoB}{Theorem B}
\newtheorem*{theoC}{Theorem C}

%%%%%%%%%%%%%%%%%%%%%%%%%%%%%%%%%%%%%%%%%%%%%%%%%%%%%%%%%%%%%%%%%%%%%%%%%%%%%%%%%%%%%%%%%%%%%%%%%%% outhers commands

\newcommand{\Nset}{\mathbb{N}}
\newcommand{\Rset}{\mathbb{R}}
\newcommand{\Sset}{\mathbb{S}}
\newcommand{\Zset}{\mathbb{Z}}
\newcommand{\Fol}{\mathscr{F}}
\newcommand{\Nfol}{\mathscr{N}}

%%%%%%%%%%%%%%%%%%%%%%%%%%%%%%%%%%%%%%%%%%%%%%%%%%%%%%%%%%%%%%%%%%%%%%%%%%%%%%%%%%%%%%%%%%%%%%%%%%% Abstract and Keywords

\begin{document}

\maketitle

\begin{abstract}
We present new open manifolds that are not homeomorphic to leaves of any $C^0$ codimension one foliation of a compact manifold. Among them are simply connected manifolds of dimension $d\geq 5$ that are non-periodic in homotopy, namely in their 2-dimensional homotopy groups.
\medskip

\noindent{\bf keywords:} foliations,
non-leaves, non-periodic manifolds, codimension one foliations
\medskip

\noindent{\bf MSC} 57R30, 57N99
\end{abstract}

\tableofcontents

%%%%%%%%%%%%%%%%%%%%%%%%%%%%%%%%%%%%%%%%%%%%%%%%%%%%%%%%%%%%%%%%%%%%%%%%%%%%%%%%%%%%%%%%%%%%%%%%%%%

%%%%%%%%%%%%%%%%%%%%%%%%%%%%%%%%%%%%%%%%%%%%%%%%%%%%%%%%%%%%%%%%%%%%%%%%%%%%%%%%%%%%%%%%%%%%%%%%%%% Section 1
\section{Introduction}

In this paper we construct a large class of open (non-compact) manifolds
that cannot be leaves of codimension one foliations on compact
manifolds (see Theorems A, B, and C).
They are constructed as ``sum-manifolds" (see Definition \ref{summanifold}) by forming connected
sums of closed connected manifolds (called ``blocks"), replacing each vertex
of an infinite tree by a block and performing a connected
sum for each edge.

These non-leaves include simply connected open
manifolds that we call {\bf non-periodic in homotopy} (or in homology,
see Definition \ref{nonperiodic} for the precise definitions), since the second (or higher) homotopy group (or homology group) is not periodic at infinity.
These non-periodic sum-manifolds give the first examples of simply-connected $5$-manifolds
that cannot be leaves in codimension one. There are uncountably
many homotopy types of such non-leaves.

\begin{prop} For every $d\geq 5$ there exist simply connected $d$-dimensional sum-manifolds that are non-periodic in homotopy
and others that are non-periodic in homology. \label{existnonper}
\end{prop}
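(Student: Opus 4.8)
\smallskip
\noindent\emph{Sketch of a proof.}
The plan is to construct such manifolds explicitly, exploiting the freedom in the choice of blocks in Definition~\ref{summanifold}. Fix $d\geq 5$ and, for every integer $k\geq 0$, set
$$B_{k}\;:=\;\#^{k}\bigl(S^{2}\times S^{d-2}\bigr),\qquad B_{0}:=S^{d}.$$
Since $d-2\geq 3$, the factor $S^{2}\times S^{d-2}$ is a closed connected simply connected manifold, and a finite connected sum of simply connected manifolds of dimension $\geq 3$ is again such; hence each $B_{k}$ is a closed connected simply connected $d$-manifold. Because $d\geq 4$, a Mayer--Vietoris computation along the connecting $(d-1)$-spheres shows $H_{2}(B_{k})\cong\Zset^{k}$, and then Hurewicz gives $\pi_{2}(B_{k})\cong H_{2}(B_{k})\cong\Zset^{k}$; more generally the inclusion of any block into a (finite or infinite) connected sum of such blocks is $\pi_{2}$-injective onto the corresponding summand.

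Next I would pick a tree $T$ as allowed by Definition~\ref{summanifold} with a distinguished end --- the simplest choice being the ray with vertices $v_{0},v_{1},v_{2},\dots$ --- and form the sum-manifold $M$ obtained by placing $B_{n}$ at $v_{n}$ (any strictly increasing assignment $n\mapsto a_{n}$ works just as well, and letting it range over uncountably many sequences that are pairwise inequivalent at infinity yields the uncountably many homotopy types mentioned in the introduction). Exhausting $M$ by the open submanifolds $M_{m}$ built from the (thickened) blocks $v_{0},\dots,v_{m}$, each $M_{m}$ is a finite connected sum of simply connected manifolds, hence simply connected, so $\pi_{1}(M)=\varinjlim_{m}\pi_{1}(M_{m})=0$; thus $M$ is simply connected. (For a general tree one argues identically, using that the tree, the blocks, and all connecting spheres are simply connected.)

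It remains to check non-periodicity in homotopy in the sense of Definition~\ref{nonperiodic}. A neighbourhood of the distinguished end beyond the $m$-th block deformation retracts onto the sub-sum-manifold carried by $\{v_{n}:n>m\}$, and by the remarks above its second homotopy group contains, as summands, the groups $\pi_{2}(B_{n})\cong\Zset^{n}$ for all $n>m$. Thus the $2$-dimensional homotopy seen arbitrarily far out in this end runs through free abelian groups of unbounded rank; such a sequence is not eventually periodic, which is exactly the failure required by Definition~\ref{nonperiodic}. Hence $M$ is simply connected and non-periodic in homotopy, and it is non-periodic in homology as well since $H_{2}(B_{n})\cong\Zset^{n}$. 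If one wants an example whose obstruction is genuinely homological --- periodic in $\pi_{2}$ but not in homology --- then for $d\geq 6$ one repeats the construction with blocks $\#^{k}(S^{3}\times S^{d-3})$, which are simply connected with $\pi_{2}=0$ but $\operatorname{rank}H_{3}=k$ (or $2k$ when $d=6$).

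The routine connected-sum homology computations are not the delicate point; the delicate point is making sure that ``non-periodic'' is verified as an invariant of the homeomorphism type of $M$ and not merely of its chosen sum-manifold presentation. One must know that an exhaustion adapted to the tree does compute the relevant end invariant and that the growing ranks of the blocks cannot be ``absorbed'' by some other exhaustion. This is precisely where the exact formulation of Definition~\ref{nonperiodic} --- together with whatever near-an-end rigidity accompanies Definition~\ref{summanifold} --- must be invoked; the rest is van Kampen, Mayer--Vietoris and Hurewicz.
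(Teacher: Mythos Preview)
Your construction does not prove the stated proposition, because it is built on a guess about Definition~\ref{nonperiodic} that turns out to be wrong. In this paper, ``non-periodic in homotopy'' does \emph{not} mean that $\pi_2$ of end-neighbourhoods fails to be eventually periodic; it is a condition on $\pi_2(M)$ itself, and it requires $\pi_2(M)$ to be a direct sum of cyclic groups of \emph{odd prime power order} in which infinitely many such prime powers occur a finite nonzero number of times. Your blocks $B_k=\#^{k}(S^2\times S^{d-2})$ give $\pi_2(B_k)\cong\Zset^{k}$, so the resulting sum-manifold has $\pi_2(M)\cong\bigoplus_n\Zset$, which is torsion-free and hence cannot satisfy the definition at all. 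The same objection kills the $S^3\times S^{d-3}$ variant for homology.

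The paper's argument instead invokes Proposition~\ref{existblocks}, which supplies simply connected $d$-blocks $B$ with $\pi_2(B)\cong\Zset_{p}$ (or $\Zset_{p}\oplus\Zset_{p}$ when $d=5$) for any prescribed prime $p$; one then picks a sequence $(p_n)$ of primes in which infinitely many primes occur only finitely often, places the corresponding block at vertex $v_n$ of a tree, and computes via Mayer--Vietoris and Hurewicz that $\pi_2(W)\cong H_2(W)\cong\bigoplus_n\pi_2(B_n)$. Your van Kampen/Mayer--Vietoris/Hurewicz machinery is exactly what is needed for that computation --- it is the choice of blocks that has to change. The ``delicate point'' you flag about exhaustion-independence is a non-issue here: the definition concerns $\pi_2(M)$ and $H_2(M)$ globally, not an end invariant.
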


\begin{theoB}\label{theoremB}
Sum-manifolds that are non-periodic in homotopy (or in homology) are not homeomorphic to any leaf of a foliation of codimension one on a compact manifold.
\end{theoB}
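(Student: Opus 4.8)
The plan is to argue by contradiction. Suppose a sum-manifold $L$ that is non-periodic in homotopy (or in homology) were homeomorphic to a leaf of a $C^0$ codimension one foliation $\Fol$ of a compact manifold $M$. Since every sum-manifold is open, $L$ is a noncompact leaf, hence not closed in $M$, so $\overline L\setminus L\neq\emptyset$. I would show that a noncompact leaf of $\Fol$ is necessarily \emph{periodic at infinity} in the precise sense of Definition \ref{nonperiodic}; this contradicts the standing hypothesis on $L$ and proves the theorem. I would treat separately the cases in which $L$ is or is not a proper leaf.

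To produce the periodicity, fix $p\in\overline L$ and a short transversal $J\ni p$. If $L$ is non-proper one may take $p\in L$ with $L\cap J$ accumulating in $J$; choose a compact connected codimension-zero submanifold-with-boundary $W\subseteq L$ with $p\in\operatorname{int}_L W$. By the Trivialization Lemma there is an embedding of $W\times(-1,1)$ onto a neighborhood of $W$ in $M$ carrying $\Fol$ to the product foliation by the slices $W\times\{t\}$ (when $L$ is simply connected the germ of holonomy along loops in $W$ is automatically trivial, so no hypothesis is needed; in general one applies the lemma to a suitable compact saturated piece of $\overline L$, using Dippolito's decomposition of $\overline L$ into finitely many compact foliated pieces). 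Intersecting $L$ with this chart yields infinitely many pairwise disjoint plaques $W_n=W\times\{t_n\}\cong W$ with $t_n$ converging in $J$; since a compact subset of $L$ contains only finitely many plaques of a foliated chart, after passing to a subsequence the $W_n$ leave every compact subset of $L$ and hence converge to an end $\varepsilon$ of $L$. The portion of $L$ joining consecutive copies $W_n$ and $W_{n+1}$ leaves the chart and returns; because $M$ is compact, $\Fol$ has only finitely many local models---equivalently, its holonomy pseudogroup on a complete transversal is compactly generated---and this forces these ``return pieces'', together with the $W_n$, to be assembled out of a fixed finite list of homeomorphism types of compact manifolds-with-boundary, glued along their boundary components. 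Consequently a neighborhood of $\varepsilon$ in $L$ admits an exhaustion $P_1\subseteq P_2\subseteq\cdots$ by compact submanifolds with each $P_{k+1}\setminus\operatorname{int}P_k$ drawn from this finite list. When $L$ is proper one argues in the same way at an end of $L$ that accumulates on a minimal set of $\Fol$, whose holonomy is again compactly generated, obtaining the same conclusion.

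It remains to observe that such a \emph{finite-type} end is periodic in homotopy, and likewise in homology. Passing from $P_k$ to $P_{k+1}$ modifies $\pi_2$ of the complementary end-neighborhood only by attaching one of finitely many pieces, so the kernels and cokernels of the bonding maps $\pi_2(L\setminus P_{k+1})\to\pi_2(L\setminus P_k)$ are generated by a number of elements bounded independently of $k$; this is precisely periodicity at infinity of $\pi_2$ at the end $\varepsilon$ in the sense of Definition \ref{nonperiodic}, and the same argument applies verbatim with $H_*$ in place of $\pi_2$. But $L$ was assumed non-periodic in homotopy (resp.\ in homology), i.e.\ no such uniform bound exists at any end---a contradiction. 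Hence $L$ is not homeomorphic to any leaf of a codimension one foliation of a compact manifold.

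The delicate step, and the one deserving the most care, is the passage from the \emph{local} product picture near $\overline L$ (the Trivialization Lemma, triviality of holonomy for simply connected leaves, and Dippolito's decomposition) to the \emph{global} finite-type, and hence periodic, structure of an end of $L$: a priori the leaf may return to a transversal chart in an intricate and even aperiodic manner and may branch, so one must genuinely invoke the finiteness of the local models of $\Fol$ on the compact manifold $M$---the compact generation of its holonomy pseudogroup---to bound the topology of the return pieces and to see that only finitely many types occur. A minor supplementary point is to check that periodicity at infinity as formulated in Definition \ref{nonperiodic} does not depend on the chosen exhaustion, so that the finite-type exhaustion produced above indeed contradicts the non-periodicity hypothesis; equivalently, one may isolate ``not of finite type at infinity'' as the operative non-leaf criterion and verify that non-periodicity implies it.
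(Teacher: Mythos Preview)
There is a genuine gap at the step you yourself flag as delicate. Compact generation of the holonomy pseudogroup only means that finitely many local holonomy maps generate it; it does \emph{not} bound the topology of the piece of $L$ lying between two successive returns to a transversal. Such a piece may traverse the charts of a finite foliated atlas in an arbitrarily long word and hence be assembled from arbitrarily many plaques, so there is no reason for the ``return pieces'' to fall into finitely many homeomorphism types. Without this, you never obtain the finite-type exhaustion of an end. There is also a mismatch with Definition~\ref{nonperiodic}: that definition is a \emph{global} condition on $\pi_k(L)$ (or $H_k(L)$)---infinitely many distinct odd prime powers occur as orders of cyclic summands, each with finite nonzero multiplicity---and says nothing about ends, exhaustions, or bonding maps. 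Even if one end $\varepsilon$ of $L$ were of finite type in your sense, the remaining ends could carry all the ``rare'' prime powers, so controlling a single end does not produce the contradiction you claim.

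The paper's route is quite different and makes no finite-type claim about ends of leaves. One first checks (Proposition~\ref{recorrence}) that a non-periodic sum-manifold $W$ satisfies the hypotheses of Theorem~A: since $W$ is $1$-connected the blocks are simply connected, and a Mayer--Vietoris count shows that a block whose $H_2$ contains one of the rare prime powers can occur as a disjoint deleted sub-block of $W$ only boundedly many times, so infinitely many blocks repeat finitely. Theorem~A then proceeds: trivial holonomy forces $L$ to be proper with a saturated product neighborhood (Lemma~\ref{localprod}); the component $\Omega_1$ of leaves homeomorphic to $L$ must fiber over $S^1$ (the interval case and the compact case are ruled out in Lemma~\ref{lemma:Ghys}); outside a compact piece of the Dippolito kernel of $\widehat\Omega_1$ the first-return map $h:L\to L$ has no periodic points, while by Proposition~\ref{cptfinite} only finitely many blocks meet that compact piece. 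Hence some finitely-repeating block $C$ lies entirely in the region without periodic points, its iterates $h^{nr}(C)$ are pairwise disjoint, and $C$ repeats infinitely---a contradiction. The finiteness input is thus the compactness of the Dippolito kernel combined with the block decomposition of $L$, not any structural statement about the ends of a general leaf.
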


This Theorem follows from Theorem A, which
includes and generalizes the results of Ghys \cite{Gh} and
Inaba {\em et al.} \cite{INTT}, who used the fundamental group
of certain open manifolds of dimension $d\geq 3$ to show that they
cannot be homeomorphic to leaves of any codimension one foliation
of a compact manifold.

\bigskip
\noindent{\bf History.} The problem of deciding when a manifold is homeomorphic or diffeomorphic to a leaf of a foliation (the ``realizability problem") is well-known. It was initially proposed by J. Sondow in 1975 \cite{So}. Since every manifold is a leaf in a product foliation, one considers non-compact manifolds and asks whether they can be leaves in foliations of compact manifolds. Cantwell and Conlon \cite{CC} showed that every open surface is diffeomorphic to a leaf of a foliation on a compact $3$-manifold, and as already mentioned,
Ghys \cite{Gh} and Inaba, Nishimori, Takamura, and Tsuchiya
 \cite{INTT}
showed that this does not hold in higher dimensions. Attie and Hurder \cite{AH}, using the first Pontrjagin class, constructed simply connected examples of dimension $6$ or greater that are not leaves in codimension one. There are also various constructions of
open Riemannian manifolds that are not quasi-isometric to
leaves of foliations on a compact manifold \cite{AH, Z, Sc1, Sc2}.

In Section \ref{secblocks} we give several definitions and outline the construction of sum-manifolds that are non-periodic in homotopy (and in homology) using Proposition \ref{existblocks}, which is proven by
constructing certain blocks in Section \ref{pconstruct}. The proof of our main result, Theorem A,
is given in Section \ref{nonleafproof} using two lemmas proven in Section \ref{secblocks}. The proof follows the proof of Ghys \cite{Gh}
in the initial steps, but the final part of the proof is
considerably simpler. The precise definition of manifolds
that are non-periodic in homotopy or homology is given in
Section \ref{nonpersec}. In this Section and the following one
it is shown that Theorem A implies Theorem B. Theorem C (in
Section \ref{secblocks}) is a version of Theorem A that
applies directly to the construction of sum-manifolds.

Some interesting open questions remain. Is it possible for a leaf in a codimension one foliation of a compact manifold to have an isolated non-periodic end? Are there any simply connected manifolds of dimension $3$ or $4$ that are not homeomorphic to leaves in codimension one foliations of a compact manifold?

This work is based on part of the doctoral thesis \cite{Sou} of the first author at PUC-Rio de Janeiro under the supervision of the second author.

%%%%%%%%%%%%%%%%%%%%%%%%%%%%%%%%%%%%%%%%%%%%%%%%%%%%%%%%%%%%%%%%%%%%%%%%%%%%%%%%%%%%%%%%%%%%%%%%%%% Section 2

\section{Blocks, sum-manifolds, and results}\label{secblocks}

\begin{defi} A {\bf block} (of dimension $d$) is a compact connected $d$-dimen\-sional manifold without boundary. If $B$ is a block, then a {\bf deleted} $B$-{\bf block} is a manifold homeomorphic to $B$ less the interiors of finitely many disjoint collared closed $d$-balls. A block is {\bf trivial} if it is homeomorphic to a sphere, and it is {\bf prime} if it cannot be decomposed as a connected sum of two non-trivial blocks, or, equivalently, if
every collared embedded $(d-1)$-sphere bounds a $d$-ball.
\label{blocks}
\end{defi}

\begin{prop} Every closed connected manifold is homeomorphic to a connected sum of a finite number of prime blocks. \label{indecsum}
\end{prop}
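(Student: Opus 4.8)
The plan is to decompose the given manifold $M$ recursively as a connected sum and to bound the length of the recursion by a numerical complexity. I first recall the formal properties of the connected sum of topological manifolds: it is well defined (independent of the removed balls and of the identifying homeomorphism, by Brown's Schoenflies theorem and the annulus theorem of Kirby, with Quinn's theorem in dimension $4$), associative, commutative, and has the trivial block $S^d$ as a two-sided identity; and by Definition~\ref{blocks} a non-prime block is exactly one that can be written $A\#B$ with $A$ and $B$ both non-trivial. For $d\le 2$ the statement is classical --- vacuous for $d\le 1$, and the classification of closed surfaces for $d=2$ (where $S^2$, $T^2$, the Klein bottle and $\mathbb{RP}^2$ are the primes) --- so I assume $d\ge 3$.

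To a finitely generated abelian group $A$ attach $\mu(A)=\operatorname{rank}(A)+\sum_p\dim_{\mathbb F_p}\operatorname{Tor}(A,\mathbb F_p)$, which is finite (the torsion subgroup of $A$ being finite), is additive on direct sums, and vanishes exactly when $A=0$. For a block $N$ of dimension $d$ put
\[
s(N)=\operatorname{rank}\pi_1(N)+\sum_{i=1}^{d-2}\mu\big(H_i(N;\mathbb Z)\big),
\]
a finite quantity since $\pi_1(N)$ is finitely generated and the $H_i(N;\mathbb Z)$ are finitely generated. I claim that (a) $s$ is additive under connected sum, and (b) $s(N)=0$ precisely when $N$ is trivial. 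For (a): van Kampen gives $\pi_1(A\#B)\cong\pi_1(A)*\pi_1(B)$ --- this uses $d\ge 3$, so that the gluing sphere is simply connected and deleting a point alters neither $\pi_1$ nor $H_i$ for $i\le d-2$ --- and Grushko's theorem makes the rank additive, while a Mayer--Vietoris computation gives $H_i(A\#B;\mathbb Z)\cong H_i(A;\mathbb Z)\oplus H_i(B;\mathbb Z)$ for $1\le i\le d-2$, to which $\mu$ applies summand by summand. For (b): the forward implication for $S^d$ is clear; conversely, if $s(N)=0$ then $\pi_1(N)=1$ and $H_i(N;\mathbb Z)=0$ for $1\le i\le d-2$, so $N$ is orientable and Poincar\'e duality gives $H_{d-1}(N;\mathbb Z)\cong H^1(N;\mathbb Z)=0$ as well, whence $N$ is a simply connected integral homology $d$-sphere, hence a homotopy sphere by the theorems of Hurewicz and Whitehead, hence homeomorphic to $S^d$ by the topological Poincar\'e conjecture (Smale for $d\ge 5$, Freedman for $d=4$, Perelman for $d=3$).

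Granting (a) and (b), the proposition follows by induction on $s(M)$. If $s(M)=0$, then $M\cong S^d$, and $S^d$ is prime, since $S^d=A\#B$ would force $s(A)=s(B)=0$ and hence $A$ and $B$ trivial; thus $M$ is a connected sum of a single prime block. If $s(M)=n>0$ and $M$ is not already prime, write $M\cong M_1\#M_2$ with $M_1,M_2$ non-trivial; then $s(M_i)\ge 1$ by (b) and $s(M_1)+s(M_2)=n$ by (a), so $s(M_i)\le n-1$, and the inductive hypothesis expresses each $M_i$ as a finite connected sum of prime blocks; hence so is $M$.

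The only real content is in (a) and (b). Additivity in (a) is routine once one checks that the point-deletion terms in the long exact sequence of $(A,A\setminus\{\mathrm{pt}\})$ and the gluing-sphere terms in the Mayer--Vietoris sequence vanish in degrees $1\le i\le d-2$; this truncation at $d-2$ is precisely what lets Poincar\'e duality dispose of degree $d-1$ in the one situation that matters, namely (b). The essential input is the topological Poincar\'e conjecture in (b): it is what guarantees that a block carrying the homotopy invariants of a sphere really is a sphere, and hence that the recursion cannot stall on a non-sphere of zero complexity --- without it, the very existence of a finite prime decomposition would be in question.
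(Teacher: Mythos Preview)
Your proof is correct and follows essentially the same approach as the paper: both bound the length of a connected-sum decomposition by combining Grushko's theorem on $\pi_1$ with Mayer--Vietoris additivity of homology in the intermediate degrees, and both invoke the Poincar\'e conjecture to conclude that a block with trivial invariants must be a sphere. Your packaging as an induction on the single complexity $s(N)$ is somewhat cleaner than the paper's direct upper bound $k\le m+n$, and your choice to truncate the homology sum at degree $d-2$ (recovering $H_{d-1}$ via Poincar\'e duality only where needed) neatly avoids the boundary term $H_{d-1}(S^{d-1})$ in the Mayer--Vietoris sequence.
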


The proof of this Proposition is given in Section
\ref{primeblocks}. Note that the decomposition is not unique for any
dimension $d\geq 2$, since the connected sum of a projective plane and a torus is homeomorphic to the connected sum of a projective
plane with a Klein bottle and this example generalizes to higher
dimensions. On the other hand, when we consider oriented
manifolds (so that the connected sum operation is well defined)
Milnor \cite{M} has shown that the decomposition is unique for oriented $3$-manifolds, but not for oriented $4$-manifolds.

\begin{exe}{\rm
(\cite{M}, p. 6) If $P$ is the complex projective plane with the
usual orientation and $P'$ is the same manifold with the opposite
orientation,
then $P\# P'\# P'$ is diffeomorphic to $(S^2\times S^2)\#
P'$, while $S^2\times S^2$ is not homeomorphic to $P$ or $P'$, and
they are prime $4$-dimensional blocks.}
\label{milnorex}
\end{exe}

Now let $B_n$ be disjoint $d$-dimensional blocks, $n=1,2,\dots$, and let $G$ be an infinite connected graph (always supposed to be locally finite) whose set of vertices is $\{v_n\}_{n\in\Nset}$ and whose set of edges is $\{e_m\}_{m\in \Nset}$. For each edge $e_m$ of $G$ with endpoints $v_{n_1}$ and $v_{n_2}$, perform a connected sum operation between $B_{n_1}$ and $B_{n_2}$ to obtain a non-compact connected manifold $W$ as in \cite{INTT}. An analogous construction can be done using a finite graph.

\begin{defi} The manifold $W$ constructed in this way is a
{\bf sum-manifold patterned on the graph} $G$ (See Figure \ref{fig:sum}).
\label{summanifold}\end{defi}

We recall that a tree is a connected and simply connected locally finite graph and
observe that the examples of codimension one non-leaves given by
Ghys \cite{Gh} and
Inaba {\it et al.} \cite{INTT} are sum-manifolds patterned on infinite trees.

\begin{figure}[H]
\centering
 \psfrag{V1}{$B_3$}
 \psfrag{V2}{$B_1$}
 \psfrag{V3}{$B_2$}
 \psfrag{a}{$T$}
 \psfrag{W}{$W$}
\includegraphics*[width=.9\linewidth]{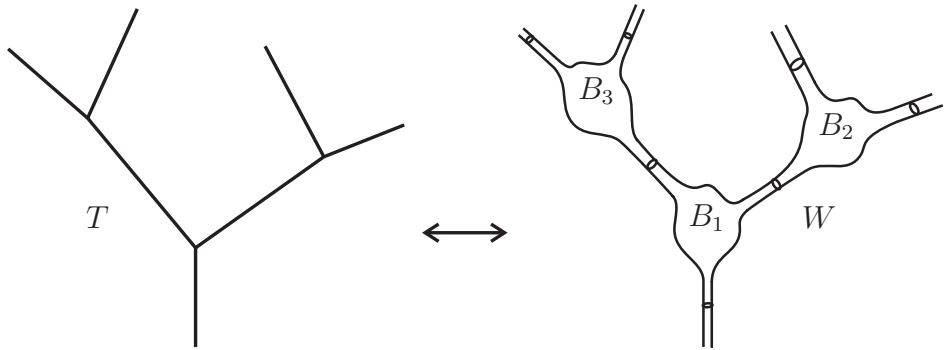}
\caption{A connected sum patterned on the tree $T$.}\label{fig:sum}
\end{figure}

\begin{defi} A block $B$ {\bf repeats finitely} in a manifold $W$ (both of dimension $d$) if $W$ contains a deleted $B$-block, but $W$ does not contain an infinite family of disjoint deleted $B$-blocks. The block $B$ {\bf repeats infinitely} if $W$ does contain an infinite family of disjoint deleted $B$-blocks.
\end{defi}

\noindent The following theorem, proven in \S\ref{nonleafproof}, is our
main result.

\begin{theoA}\label{theoremA}
Let $W$ be a sum-manifold patterned on an infinite tree
such that the fundamental group of each block is
generated by torsion elements of odd order (or is trivial)
and infinitely many non-homeomorphic blocks repeat finitely.
Then $W$ is not homeomorphic to any leaf of a codimension one foliation of a compact manifold.
\end{theoA}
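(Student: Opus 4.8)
The plan is to follow the strategy of Ghys, but with a simplified endgame exploiting the fact that infinitely many non-homeomorphic blocks repeat finitely. Suppose for contradiction that $W$ is homeomorphic to a leaf $L$ of a codimension one foliation $\Fol$ of a compact manifold $M$. First I would pass to a minimal set: the closure $\overline{L}$ contains a minimal set $X$ of $\Fol$, and by the standard Ghys-type analysis one can assume $L$ accumulates on a leaf $L_0 \subseteq X$ (possibly $L_0 = L$ itself if $L$ is its own minimal set, but the interesting case is when $L$ is not proper). The key structural input, which I expect to be packaged as one of the two lemmas promised for Section \ref{secblocks}, is that because the foliation has codimension one, a small transversal $\tau$ to $L_0$ meets $L$ in a set whose accumulation structure is highly constrained: following $L$ along paths and returning near $L_0$ produces, via holonomy, homeomorphic copies of large compact pieces of $L$ sitting inside foliated product neighborhoods. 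Concretely, the holonomy pseudogroup acting on $\tau$ forces a self-similarity: for any compact piece $K \subseteq L$, there are infinitely many disjoint embedded copies of (a slight enlargement of) $K$ in $L$, obtained by sliding $K$ through product charts stacked along $\tau$.

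The heart of the argument is then a counting/repetition contradiction. Since $W$ is a sum-manifold on an infinite tree and infinitely many non-homeomorphic blocks $B_{n_1}, B_{n_2}, \dots$ repeat finitely, I would fix one such block $B = B_{n_j}$ and a deleted $B$-block $K \subseteq W$. The topological rigidity of connected-sum decompositions (Proposition \ref{indecsum}) together with the prime decomposition lets us recognize $B$-blocks inside $W$ up to homeomorphism: a deleted $B$-block embedded in $W$ must, because the tree pattern has no cycles, correspond to (a union of) actual blocks in the pattern whose connected sum is $B$ — and since $B$ repeats finitely, only finitely many disjoint such copies exist. But the holonomy self-similarity from the previous paragraph produces \emph{infinitely many} disjoint embedded deleted $B$-blocks in $L \cong W$: starting from one copy of $K$, the foliated product structure near $L_0$ yields infinitely many disjoint translates. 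This contradicts finite repetition. The hypothesis that $\pi_1$ of each block is generated by odd-order torsion (or is trivial) enters in the step where one must rule out that the holonomy-produced copies of $K$ are ``absorbed'' or glued in a way that changes the homeomorphism type — it guarantees that the relevant boundary spheres of deleted blocks cannot be capped off or identified via orientation-reversing or infinite-order monodromy along the leaf, so the copies remain genuinely disjoint deleted $B$-blocks; this is where the ``odd order'' is used (an order-two element would allow problematic $\Zset/2$ identifications).

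The main obstacle, which I expect to occupy the technical core of Section \ref{nonleafproof}, is establishing the holonomy self-similarity rigorously in the $C^0$ category: with only continuity one lacks a smooth transverse structure, so one must argue purely topologically that accumulation of $L$ on a leaf in a codimension one $C^0$ foliation, combined with compactness of $M$, forces the product-chart stacking that yields infinitely many disjoint copies of any fixed compact submanifold of $L$. Ghys handles the analogous point for $\pi_1$; here the simplification Souza and Schweitzer advertise is presumably that one does not need the full strength of Ghys's end-periodicity analysis — once you have even \emph{one} repetition forced by the product structure, you can iterate to get infinitely many, and finite-repetition of a block is an obviously homeomorphism-invariant obstruction. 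So I would structure the proof as: (1) reduce to a leaf accumulating on a leaf in a product-saturated neighborhood; (2) the first lemma: extract one foliated product chart containing a copy of a prescribed deleted $B$-block; (3) the second lemma: iterate disjointly to get infinitely many, using the odd-order $\pi_1$ hypothesis to keep them genuine and disjoint; (4) invoke finite repetition of $B$ for the contradiction. Theorems B and C then follow by checking that non-periodicity in homotopy or homology forces infinitely many non-homeomorphic finitely-repeating blocks in the sum-manifold presentation.
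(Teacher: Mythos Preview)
Your outline misidentifies the role of the odd-order torsion hypothesis, and this misstep propagates into a wrong picture of the whole argument. In the paper, the hypothesis that each block has $\pi_1$ generated by odd-order torsion is used at the very start to show that the leaf $L$ has \emph{trivial holonomy}: after passing to a transversely oriented double cover, a germ of an orientation-preserving interval homeomorphism of odd finite order fixing $0$ must be the identity, so every generator of $\pi_1(L)$ has trivial holonomy. This, together with the existence of a finitely-repeating block, forces $L$ to be \emph{proper} and to admit a saturated product neighborhood $L\times(-1,1)$ (an extended Reeb stability, Proposition \ref{extendedRS} and Lemma \ref{localprod}). Your explanation that odd order prevents ``$\Zset/2$ identifications'' or keeps translated copies ``genuine'' is not what is happening; nothing in the argument concerns capping off boundary spheres or orientation-reversing gluings.

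Because of this, your ``interesting case is when $L$ is not proper'' is exactly backwards, and the mechanism you propose for producing infinitely many copies --- accumulation on a leaf $L_0$ in a minimal set, then sliding an arbitrary compact $K\subset L$ through stacked product charts --- is neither what the paper does nor correct as stated. The paper instead forms the saturated open set $\Omega_1$ of leaves homeomorphic to $L$, applies Dippolito's kernel--branches decomposition to its completion $\widehat\Omega_1$, shows $\widehat\Omega_1$ is noncompact with boundary leaves having infinite cyclic contracting holonomy, and rules out the interval case, so $\Omega_1$ fibers over $S^1$. The monodromy $h:L\to L$ then satisfies $h(x)>x$ on the branches and near $\partial\widehat\Omega_1$, hence has no periodic points there; since the complement meets only finitely many blocks by Proposition \ref{cptfinite}, some finitely-repeating block $C$ lies entirely in the non-periodic region, and the iterates $h^{nr}(C)$ are pairwise disjoint. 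The infinitude of copies is thus obtained only for blocks outside a specific compact core, via the first-return map of a circle fibration --- not for an arbitrary compact piece via accumulation. Your steps (1)--(3) would need to be replaced by this Dippolito/monodromy analysis; as written they do not yield the disjoint copies you claim.
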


In order to get blocks that repeat finitely, we shall use
sets of blocks with certain properties, which we now define.

\begin{defi} A set $\mathscr S$ of blocks of dimension $d\geq 3$ is {\bf non-repeating} if the two following conditions are satisfied:
\begin{enumerate}
\item If $B_1, B_2\in  \mathscr S$ are distinct blocks and
 $\pi_1(B_1)$
and $\pi_1(B_2)$ are expressed as free products of prime
groups, then $\pi_1(B_1)$ and $\pi_1(B_2)$ have no isomorphic
prime factors;
\item If a block $B_1\in {\mathscr S}$ is simply connected, then
there exist a number $r, 1<r
< d$ and a cyclic group ${\mathbb Z}_{p^j}$
of prime power order which is
isomorphic to a direct summand of
$H_r(B_1)$, but not to a direct summand of $H_r(B)$
for any other block $B\in {\mathscr S}$.
\end{enumerate} \label{nonrepset}
\end{defi}

\begin{prop}
If $W$ is a sum-manifold patterned on a tree using only blocks
from a non-repeating set, and a block $B$ is used for
only a finite number of vertices, then the block $B$
repeats finitely in $W$. \label{finiteblocks}
\end{prop}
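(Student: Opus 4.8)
The plan is to argue by contradiction, localising to finite subtrees so that all group- and homology-theoretic invariants stay finitely generated. Since $B$ is the block carried by at least one vertex of the tree, $W$ already contains a deleted $B$-block (that vertex block with its finitely many connected-sum balls removed), so what must be ruled out is the existence of an infinite pairwise disjoint family $\{D_i\}_{i\in\Nset}$ of deleted $B$-blocks in $W$. Fix $n$; the family $D_1,\dots,D_n$ is compact, hence contained in the interior of a compact sub-sum-manifold $W_n\subset W$ patterned on a finite subtree $T_n$ of the given tree. Capping the boundary spheres of each $D_i$ with collared balls produces a space homeomorphic to $B$, and capping the boundary spheres of $W_n$ produces the closed manifold $\widehat W_n\cong\#_{v\in T_n}B_v$. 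The goal is to show that each $D_i$ forces a copy of a fixed algebraic invariant of $B$ into the corresponding invariant of $W_n$, while the total number of such copies available in $W_n$ is bounded independently of $n$ because $B$ is used at only finitely many vertices and the set is non-repeating; letting $n\to\infty$ gives the contradiction. The invariant used splits into two cases according to Definition \ref{nonrepset}.

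\textbf{Case 1: $B$ is not simply connected.} Let $G_0$ be a prime (freely indecomposable, nontrivial) free factor of $\pi_1(B)$. By Definition \ref{nonrepset}(1), $G_0$ is not isomorphic to any prime factor of $\pi_1(B')$ for any other block $B'$ of the non-repeating set, so, letting $M$ be the (finite) number of vertices carrying $B$, the multiplicity of $G_0$ among the prime factors of $\ast_{v\in T_n}\pi_1(B_v)$ is at most $M\cdot(\text{mult.\ of }G_0\text{ in }\pi_1(B))$, a bound independent of $n$. Since $d\geq 3$, removing or capping collared balls does not change $\pi_1$ and the separating/non-separating $(d-1)$-spheres $\partial D_i$ are simply connected; cutting $W_n$ along $\bigsqcup_{i=1}^n\partial D_i$ and applying van Kampen's theorem (equivalently, the Bass--Serre decomposition) exhibits $\pi_1(W_n)$ as a free product having $\pi_1(B)$, and hence $G_0$, as a free factor at least $n$ times. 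On the other hand $\pi_1(W_n)\cong \ast_{v\in T_n}\pi_1(B_v)$; by the Grushko--Kurosh uniqueness of free-product decompositions into freely indecomposable factors the multiplicity of $G_0$ is the same in both decompositions, forcing $n\leq M\cdot(\text{mult.\ of }G_0\text{ in }\pi_1(B))$ for all $n$, a contradiction.

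\textbf{Case 2: $B$ is simply connected.} Then $B$ is orientable, and by Poincaré duality $H_1(B)$ and $H_{d-1}(B)$ are free, so the integer $r$ and cyclic group $\Zset_{p^j}$ of prime power order with $\Zset_{p^j}$ a direct summand of $H_r(B)$ supplied by Definition \ref{nonrepset}(2) satisfy $1<r<d-1$. For such $r$ we have $H_r(S^{d-1})=H_{r-1}(S^{d-1})=0$, so by Mayer--Vietoris $H_r(-;\Zset)$ is additive under connected sum and unaffected by removing or capping collared balls; thus $H_r(W_n)\cong\bigoplus_{v\in T_n}H_r(B_v)$, in which $\Zset_{p^j}$ occurs as a primary summand exactly $\sum_{v\in T_n}m_{p,j}\!\bigl(H_r(B_v)\bigr)$ times. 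By Definition \ref{nonrepset}(2) this multiplicity vanishes for every block of the set other than $B$, so the total is at most $M\cdot m_{p,j}(H_r(B))$, independent of $n$. Cutting $W_n$ along $\bigsqcup_{i=1}^n\partial D_i$ and applying Mayer--Vietoris in degree $r$ gives $H_r(W_n)\cong\bigoplus_{i=1}^n H_r(B)\oplus H_r(\,\cdot\,)$, so $\Zset_{p^j}$ occurs at least $n$ times; hence $n\leq M\cdot m_{p,j}(H_r(B))$ for all $n$, a contradiction. (When $d=3$ the simply connected case is vacuous, since a simply connected $3$-block is a sphere and trivial blocks do not belong to a non-repeating set.)

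Both cases run on the same principle: an embedded deleted $B$-block injects a free factor of $\pi_1(B)$, respectively a $\Zset_{p^j}$-summand of $H_r(B)$, into the corresponding invariant of the ambient manifold, and the non-repeating hypothesis caps how many such copies can exist globally. The two technical points I expect to require care are (i) confining $D_1,\dots,D_n$ to a compact sub-sum-manifold $W_n$ over a finite subtree, so that one may legitimately invoke Grushko's theorem for finitely generated groups (in Case 2 finite generation is not strictly necessary, but keeping $W_n$ uniform is convenient), and (ii) the bookkeeping of the cut-and-paste Mayer--Vietoris/van Kampen arguments: the spheres $\partial D_i$ need not be separating, but $S^{d-1}$ is $(d-2)$-connected with $H_{d-1}(S^{d-1})\cong\Zset$ contributing only to free rank in degree $d-1$, which is exactly why the homological argument was restricted to $r<d-1$. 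I expect step (ii), making precise that the invariant of $B$ reappears with multiplicity at least $n$ inside the invariant of $W_n$, to be the main point to get right.
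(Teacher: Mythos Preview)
Your proposal is correct and follows essentially the same approach as the paper: localise a finite collection of disjoint deleted $B$-blocks to a compact sub-sum-manifold over a finite subtree (the paper's Proposition~\ref{cptfinite}), then in Case~1 use van Kampen plus Grushko--Neumann uniqueness to bound the multiplicity of a prime free factor of $\pi_1(B)$ (the paper packages the cut-and-paste step as Lemma~\ref{finiteblocks2}), and in Case~2 use Mayer--Vietoris to bound the multiplicity of the $\Zset_{p^j}$ summand in $H_r$ (the paper's Lemma~\ref{finiteblocks3}). Your observation that for a simply connected block one automatically has $r<d-1$, so that both $H_r(S^{d-1})$ and $H_{r-1}(S^{d-1})$ vanish, is a clean refinement that the paper leaves implicit.
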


This proposition will be proven in \S\ref{primeblocks}.
It should be noted that it is not obvious, since even if $B$
is an prime block, a deleted $B$-block
may occur in a sum-manifold in which no vertex was replaced
by a block homeomorphic to $B$, as in Example \ref{milnorex}, where
$S^2\times S^2\setminus {\rm Int}(D^4)$ is a deleted prime block
which embeds in $P\# P'\# P'$.
Theorem A and this proposition together imply the following result.

\begin{theoC} Let $W$ be a sum-manifold patterned on an infinite
tree using only blocks from a non-repeating set $\mathscr S$ such that
each block has a fundamental group generated by torsion
elements of odd order. If the set $\mathscr{T}\subseteq {\mathscr S}$
of blocks that are each used a finite non-zero number of times
to replace vertices of the tree in the construction of $W$ as a sum-manifold is infinite,
then the blocks in $\mathscr T$ repeat finitely, and consequently
$W$ is not homeomorphic to any leaf of a codimension one foliation of a compact manifold. \label{infrepeatfin}
\end{theoC}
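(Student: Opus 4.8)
The plan is to derive Theorem C as a direct combination of Proposition~\ref{finiteblocks} (applied to the non-repeating set $\mathscr S$) and Theorem A, so the work is essentially to check that the hypotheses of Theorem A are met by $W$. First I would fix the sum-manifold $W$ patterned on an infinite tree $G$ using only blocks from $\mathscr S$, and let $\mathscr T\subseteq\mathscr S$ be the infinite set of blocks used a finite nonzero number of times. For each $B\in\mathscr T$, Proposition~\ref{finiteblocks} applies verbatim — $W$ is patterned on a tree using only blocks from the non-repeating set $\mathscr S$, and $B$ is used for only a finite (and in particular finite) number of vertices — so $B$ repeats finitely in $W$. Hence infinitely many pairwise non-homeomorphic blocks (the members of $\mathscr T$, which are distinct as blocks, and distinct blocks in a non-repeating set are certainly non-homeomorphic since non-homeomorphic is implied by either of conditions (1) or (2) in Definition~\ref{nonrepset}) repeat finitely in $W$.

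Next I would verify the remaining hypothesis of Theorem A: that the fundamental group of each block of $W$ is generated by torsion elements of odd order (or is trivial). This is exactly the standing assumption on $\mathscr S$ in the statement of Theorem C, so there is nothing to prove here beyond noting that the blocks replacing the vertices of $G$ are precisely the members of $\mathscr S$ that get used. Therefore all hypotheses of Theorem A hold for $W$, and Theorem A yields that $W$ is not homeomorphic to any leaf of a codimension one foliation of a compact manifold, which is the conclusion.

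The only genuine subtlety — and the step I would present most carefully — is the logical bookkeeping around "non-homeomorphic": Theorem A requires \emph{infinitely many non-homeomorphic} blocks that repeat finitely, whereas Theorem C hands us an infinite set $\mathscr T$ of blocks each used finitely often. I would argue that the members of $\mathscr T$, being distinct elements of a non-repeating set, are mutually non-homeomorphic: two homeomorphic blocks would have isomorphic fundamental groups (ruled out by Definition~\ref{nonrepset}(1) unless both groups are trivial) and, in the simply connected case, isomorphic homology in every degree (ruled out by Definition~\ref{nonrepset}(2)). Combining this with the per-block conclusion of Proposition~\ref{finiteblocks} gives the infinite family of non-homeomorphic finitely-repeating blocks required by Theorem A. I do not expect any real obstacle; Theorem C is a packaging statement, and the substantive content lives in Theorem A and Proposition~\ref{finiteblocks}, both of which are cited as already available.
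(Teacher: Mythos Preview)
Your proposal is correct and matches the paper's own argument: the paper states explicitly that Theorem~C follows from Theorem~A together with Proposition~\ref{finiteblocks}, and you have simply spelled out that deduction. Your extra care in verifying that distinct members of a non-repeating set are pairwise non-homeomorphic (via conditions (1) and (2) of Definition~\ref{nonrepset}) is a welcome detail that the paper leaves implicit.
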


%%%%%%%%%%%%%%%%%%%%%%%%%%%%%%%%%%%%%%%%%%%%%%%%%%%%%%%%%%%%%%%%%%%%%%%%%%%%%%%% Section 3

\section{Prime blocks}\label{primeblocks}

In this section we prove Propositions \ref{indecsum}
and \ref{finiteblocks}, but first we
observe that many prime blocks exist.

\begin{prop}

1. A $3$-dimensional block $B$ whose fundamental group $\pi_1(B)$ is not isomorphic to a free product of non-trivial groups
(for example, a lens space) is prime.

2. For every dimension $d\geq 5$ and every positive integer $n$, there is a prime $1$-connected $d$-dimensional
block $B$ with $\pi_2(B)$ isomorphic to $Z_n$ if $d\geq 6$, or to $Z_n$ or $Z_n\oplus Z_n$ if $d=5$. \label{existblocks}
\end{prop}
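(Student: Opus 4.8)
The plan is to construct both families of prime blocks explicitly, exploiting the fact that in high dimensions simply connected manifolds are controlled by surgery theory and, more elementarily, that handlebodies let one prescribe homology.

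For part 1, I would argue by contrapositive. Suppose a $3$-dimensional block $B$ is not prime, so $B \cong B_1 \# B_2$ with both $B_i$ non-trivial. By van Kampen, $\pi_1(B) \cong \pi_1(B_1) * \pi_1(B_2)$. The only way this could fail to be a free product of non-trivial groups is if one factor, say $\pi_1(B_2)$, is trivial; but a simply connected closed $3$-manifold is homeomorphic to $S^3$ (Poincar\'e conjecture, Perelman), making $B_2$ trivial, a contradiction. Hence $\pi_1(B)$ is a non-trivial free product, which is the desired conclusion. For the parenthetical remark about lens spaces, one notes $\pi_1(L) \cong \Zset_n$, and a finite cyclic group is freely indecomposable (by the Kurosh subgroup theorem, or directly since a free product of non-trivial groups is infinite), so $L$ is prime. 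The main subtlety here is that one really does need the Poincar\'e conjecture; without it one would only get that $B$ is prime in the homotopy-theoretic sense used in Definition \ref{blocks}, but since that definition phrases primeness via embedded spheres bounding balls, the $3$-dimensional Poincar\'e and Sch\"onflies theorems are exactly what is needed.

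For part 2, the construction is by attaching handles to a disk. Start with $D^d$ and attach one $2$-handle along an unknot in $S^{d-1} = \partial D^d$ with framing chosen so that the resulting $4$-manifold-analogue has $H_2 \cong \Zset_n$: concretely, build a compact manifold $N$ with boundary whose only nontrivial homology is $H_2(N) \cong \Zset_n$ (for $d \geq 6$) by attaching a $2$-handle and a $3$-handle to $D^d$ with the attaching map of the $3$-handle wrapping $n$ times around the belt sphere of the $2$-handle — this is the standard ``Moore space'' handle construction. For $d = 5$ one attaches two $2$-handles and two $3$-handles to realize either $\Zset_n$ or $\Zset_n \oplus \Zset_n$ in middle-ish degree; the slight difference in the allowed groups for $d = 5$ reflects that $\partial N$ must be simply connected and the handles can interact. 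Then double $N$ along its boundary, or better, verify that $\partial N$ is a homotopy sphere (hence, by the high-dimensional Poincar\'e conjecture, a genuine sphere) and cap it off with a $d$-disk to obtain a closed $1$-connected manifold $B$ with $\pi_2(B) \cong H_2(B) \cong \Zset_n$ (or $\Zset_n \oplus \Zset_n$). The Hurewicz theorem identifies $\pi_2(B)$ with $H_2(B)$ since $B$ is simply connected.

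The main obstacle is verifying that the block $B$ so constructed is actually \emph{prime}, i.e., that every collared embedded $(d-1)$-sphere in $B$ bounds a $d$-ball. Equivalently, one must rule out a connected-sum decomposition $B \cong B_1 \# B_2$ with both summands non-trivial. Since $B$ is simply connected, so are $B_1$ and $B_2$, and $H_*(B) \cong H_*(B_1) \oplus H_*(B_2)$ in positive degrees. If $\pi_2(B) \cong H_2(B) \cong \Zset_n$ with $n$ a prime power $p$ (or, for general $n$, one picks $n$ to be a prime power to begin with, which suffices since the statement only requires \emph{some} block for each $n$ — but actually the statement demands $\Zset_n$ for \emph{every} $n$, so one argues that $\Zset_n$ is indecomposable as a direct sum of two non-trivial groups only when $n$ is a prime power, and otherwise uses a $\Zset_n = \Zset_{p_1^{a_1}} \oplus \cdots$ splitting argument more carefully). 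To force primeness one should arrange the homology so that it \emph{cannot} split: take $H_2(B) \cong \Zset_{p^k}$ for a prime power and all other reduced homology of $B$ below the top dimension to vanish; then in any decomposition one $B_i$ carries all of $\Zset_{p^k}$ and the other is a homology sphere, hence (being simply connected) a homotopy sphere, hence $S^d$, so trivial. For the general $n$ case one instead builds $B$ with $H_2(B) \cong \Zset_n$ concentrated in a single degree together with a linking form or cup-product structure that obstructs splitting; alternatively one simply observes that the Proposition as used downstream (in Definition \ref{nonrepset} and Proposition \ref{existnonper}) only needs prime-power orders $p^j$, so it is enough to produce prime blocks with $\pi_2 \cong \Zset_{p^j}$, and the homology-sphere argument above then closes the proof. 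I would therefore organize the write-up to first treat prime-power $n$ (where primeness is clean) and then remark on the general case.
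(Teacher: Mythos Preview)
Your Part~1 is correct and essentially identical to the paper's argument: contrapositive, Seifert--van Kampen, and the Poincar\'e conjecture to rule out a simply connected non-trivial summand.

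For Part~2 your route is related to but genuinely different from the paper's, and the difference is instructive. The paper does not build a handlebody directly. For $d=5$ it simply quotes Smale's classification of simply connected $5$-manifolds to obtain a closed $B$ with $\pi_2(B)\cong \Zset_p\oplus\Zset_p$; for $d\geq 6$ it embeds the suspension of the Moore space $S^1\cup_f D^2$ (with $f$ of degree $p$) into $\Rset^d$, takes a regular neighborhood $V$, and \emph{doubles} $V$ to get $B$, computing $H_2(B)\cong\Zset_p$ by Mayer--Vietoris. Most importantly, the paper does \emph{not} attempt to show that this $B$ is prime. Instead it invokes Proposition~\ref{indecsum}: decompose $B$ as a connected sum of prime blocks, note that $H_2$ splits as a direct sum over the summands, and use that $\Zset_{p^k}$ is indecomposable as an abelian group to conclude that some prime summand already has the desired $H_2$. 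This is a clean shortcut that completely avoids the issue you flag.

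Your direct primeness argument, by contrast, requires you to control \emph{all} of $H_*(B)$: Poincar\'e duality forces $H_{d-3}(B)$ to carry a $\Zset_{p^k}$ as well, and you would need to verify that nothing else appears (for the doubled handlebody this is a genuine computation you have not carried out), or else a putative summand $B_2$ with $H_2(B_2)=0$ need not be a homology sphere. Likewise, the ``cap off with a disk'' alternative needs $\partial N$ to be a sphere, which is not automatic. Your $d=5$ sketch (``two $2$-handles and two $3$-handles \ldots the handles can interact'') is too vague to stand on its own; Smale's theorem is exactly what makes that case tractable. Your observation that downstream only prime-power orders are needed is correct and matches what the paper actually proves; routing primeness through Proposition~\ref{indecsum} is the missing idea that would let you finish without the extra homology bookkeeping.
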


Milnor \cite{M} remarks that
the first assertion follows easily from the Poincar\'e conjecture
that a simply connected closed connected $3$-manifold
is homeomorphic to the $3$-sphere. It has
been proven by G. Perelman. If a $3$-dimensional block is a
connected sum of two non-trivial blocks $B_1$ and $B_2$,
the Seifert-van Kampen Theorem shows that its
fundamental group must be isomorphic to the non-trivial free product
$\pi_1(B_1)*\pi_1(B_2)$.
The proof of the second assertion is given in \S\ref{pconstruct}.

In the proof of Proposition \ref{indecsum} we shall use the
Grushko-Neumann Theorem (\cite{K}, vol. 2, p. 58):
\begin{theo}{\ \rm (Grushko-Neumann)}
Every finitely generated group can be represented as a
free product of a finite number of groups that are indecomposable as
free products, and the decomposition is unique up to isomorphism
and the order of the factors.
\end{theo}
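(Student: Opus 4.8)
The plan is to reduce the statement to Grushko's theorem and then derive the existence of the decomposition from the resulting rank formula and its uniqueness from the Kurosh subgroup theorem. For a group $H$ write $\mathrm{rk}(H)$ for the minimal number of generators. I would first prove Grushko's theorem in the form: if $\phi\colon F\twoheadrightarrow A\ast B$ is a surjection from a free group $F$ of finite rank, then $F$ admits a free-product decomposition $F=F_A\ast F_B$ with $\phi(F_A)=A$ and $\phi(F_B)=B$. The argument I would use is Stallings' topological one. Let $R$ be a wedge of $\mathrm{rk}(F)$ circles, so $\pi_1(R)=F$, and let $Y=X_A\vee X_B$ be the wedge, at a point $p$, of aspherical complexes with $\pi_1(X_A)=A$ and $\pi_1(X_B)=B$; realize $\phi$ by a cellular map $f\colon R\to Y$. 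After a homotopy one may assume $f^{-1}(p)$ is a finite set of interior points of the edges of $R$, giving $R$ the structure of a graph of spaces over a finite graph $\Gamma$ whose edges carry the preimage points and whose vertex regions map into $X_A$ or into $X_B$. A sequence of Stallings' combinatorial ``sliding'' moves (binding ties) then reduces the number of preimage points, without changing $\pi_1(R)$, until $\Gamma$ may be taken to be a tree; at that stage $F=\pi_1(R)$ is the free product of the fundamental group of the union of the $X_A$-pieces and that of the union of the $X_B$-pieces, and $\phi$ carries these onto $A$ and onto $B$ respectively.

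Applying this with $F$ free of rank $\mathrm{rk}(A\ast B)$ and $\phi$ surjective, and combining $\mathrm{rk}(A)+\mathrm{rk}(B)\le\mathrm{rk}(F_A)+\mathrm{rk}(F_B)=\mathrm{rk}(F)$ with the obvious reverse inequality, gives the rank formula $\mathrm{rk}(A\ast B)=\mathrm{rk}(A)+\mathrm{rk}(B)$, and by induction $\mathrm{rk}(C_1\ast\cdots\ast C_k)=\sum_i\mathrm{rk}(C_i)$. Now let $G$ be generated by $n$ elements. In any free-product decomposition $G=C_1\ast\cdots\ast C_k$ into nontrivial factors the rank formula forces $k\le\sum_i\mathrm{rk}(C_i)=\mathrm{rk}(G)\le n$. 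Hence, starting from $G$ and repeatedly splitting off any factor that is still a nontrivial free product, the process terminates after at most $n$ steps, and the terminal factors are indecomposable as free products (the infinite cyclic ones being counted among them). This is the existence statement.

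For uniqueness, suppose $G=A_1\ast\cdots\ast A_m\ast\mathbb{Z}^{\ast k}=B_1\ast\cdots\ast B_n\ast\mathbb{Z}^{\ast l}$ with all $A_i,B_j$ freely indecomposable and not infinite cyclic. Regarding $B_1$ as a subgroup of the first decomposition and applying the Kurosh subgroup theorem, $B_1$ is a free product of a free group with subgroups conjugate into the factors $A_i$ and $\mathbb{Z}$; since $B_1$ is freely indecomposable and not infinite cyclic, this is possible only if $B_1$ equals a single conjugate $gA_ig^{-1}$, whence $B_1\cong A_i$. By symmetry each $A_i$ is isomorphic to some $B_j$, and keeping track of multiplicities shows that the multisets of isomorphism types $\{[A_1],\dots,[A_m]\}$ and $\{[B_1],\dots,[B_n]\}$ agree; then $m=n$, and comparing total ranks via the rank formula gives $k=l$. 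This is exactly uniqueness up to isomorphism and order of the factors. (Equivalently, one may phrase these last steps in Bass--Serre language, each decomposition corresponding to a minimal $G$-tree with trivial edge stabilizers whose vertex stabilizers are the conjugates of the factors.)

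The substantive content is Grushko's theorem itself, and the delicate point inside it is the combinatorial reduction: showing that the binding-tie/folding moves can always be performed so as to strictly decrease the number of preimage points until the carrier graph is a tree. Once this is granted, the rank formula, the termination argument for existence, and the Kurosh-based uniqueness argument are all routine. Since the present paper merely invokes the theorem, which is proved in Kurosh's book \cite{K}, a complete proof is unnecessary here; the above is the standard line of argument.
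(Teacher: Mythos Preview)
Your outline is correct and follows the standard modern route: Stallings' topological proof of Grushko's theorem yields the rank additivity formula $\mathrm{rk}(A\ast B)=\mathrm{rk}(A)+\mathrm{rk}(B)$, which forces any splitting process on a finitely generated group to terminate (existence), and the Kurosh subgroup theorem pins down the non--infinite-cyclic indecomposable factors up to conjugacy, hence up to isomorphism, with the rank formula then matching the free parts (uniqueness). The one point you flag as delicate---the binding-tie reduction in Stallings' argument---is indeed the only nontrivial step, and your description of it is accurate.

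The paper, however, contains no proof of this statement at all: the Grushko--Neumann theorem is simply quoted and attributed to Kurosh's book \cite{K}, vol.~2, p.~58, and then invoked as a black box in the proof of Proposition~\ref{indecsum}. There is therefore nothing in the paper to compare your argument against; you have supplied a proof sketch where the authors supply only a citation, as your own closing sentence already anticipates.
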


\begin{proof}[Proof of Proposition \ref{indecsum}.] Let $B$ be
a block of dimension $d$ that is homeomorphic
to $B_1\#\dots\# B_k$ for some $k$ with all blocks
$B_i$ non-trivial.
Since the Proposition is well-known for dimensions $1$ and $2$, we
assume that $d\geq 3$. Reorder the factors
so that the blocks $B_i$ for $1\leq i\leq \ell$ are the
ones which are not simply connected.
Now $\pi_1(B)$ is isomorphic to the free product
$\pi_1(B_1)*\dots *\pi_1(B_\ell)$ by the
Seifert-van Kampen Theorem. Since these groups are all
finitely generated, the Grushko-Neumann Theorem shows
that each has a unique decomposition as a free product of a finite number of groups that are indecomposable as free products.
If $\pi_1(B)$ has $m$ free factors, then $\ell\leq m$,
since each $\pi_1(B_i)$ contributes at least one free factor.

Now the direct sum of the homology groups
$\bigoplus_{r=2}^{d-1}H_r(B)$ is a finitely generated
abelian group, so it is a direct sum of a finite
number, say $n$, of cyclic groups of infinite or prime power
order. If $B$ is a connected sum of two blocks, so that
$B$ is the union of deleted blocks $B'$ and $B''$ with $B'\cap B''=S^{d-1}$, then
it follows from the Mayer-Vietoris exact
sequence of homology groups
$$H_{r-1}(S^{d-1})\to H_r(B')\oplus H_r(B'')\to H_r(B)\to H_r(S^{d-1})$$
that $H_r(B)\approx H_r(B')\oplus H_r(B'')$
for $1<r<d$, since the first
and last terms vanish. By induction it follows that
$H_r(B)= \bigoplus_{i=1}^kH_r(B_i)$ for every $r, 1<r<d$, so at most $n$ blocks $B_i$ can have $H_r(B_i)\neq 0$ for some $r,\ 1< r
<d$. By the generalized Poincaré conjecture, every
$B_i$ with $i>\ell$ must have such a non-trivial homology
group, for otherwise it would be a $d$-sphere, which
is excluded. Thus $k-m\leq n$ and $k$ cannot be greater than $m+n$.
If we choose the connected sum decomposition so that $k$
is maximal, then each summand $B_i$ must be prime.
\end{proof}

We shall use the following result.

\begin{prop} Any compact set $K$ contained in a sum-manifold $W$
must be contained in the union of a finite number of the
deleted blocks whose union is $W$. \label{cptfinite}
\end{prop}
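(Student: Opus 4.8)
The plan is to use the local finiteness of the graph $G$ together with compactness of $K$. Recall that $W$ is built from countably many deleted blocks $B'_n$ (one for each vertex $v_n$), glued along $(d-1)$-spheres, one sphere $S_m$ for each edge $e_m$. First I would observe that each deleted block $B'_n$ is an open subset of $W$ if we thicken it slightly: more precisely, for each $n$ let $U_n$ be the union of the interior of $B'_n$ together with small open collar neighborhoods (inside the adjacent blocks) of each of the gluing spheres $S_m$ lying on the boundary of $B'_n$. Since $G$ is locally finite, $v_n$ has only finitely many incident edges, so $U_n$ is obtained from $\mathrm{Int}(B'_n)$ by adjoining finitely many collars, hence $U_n$ is open in $W$ and $\overline{U_n}$ is compact (it is contained in the union of $B'_n$ and finitely many collars in neighboring blocks). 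The collection $\{U_n\}_{n\in\Nset}$ is an open cover of $W$, because every point of $W$ lies either in the interior of some $B'_n$ or on some gluing sphere $S_m$, and in the latter case it lies in the collar portion of $U_{n_1}$ (and $U_{n_2}$) for the endpoints of $e_m$.

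Next, since $K\subseteq W$ is compact and $\{U_n\}$ is an open cover, there is a finite subcover $U_{n_1},\dots,U_{n_s}$ with $K\subseteq U_{n_1}\cup\dots\cup U_{n_s}$. Each $U_{n_i}$ meets only finitely many deleted blocks (namely $B'_{n_i}$ and the blocks adjacent to it along the finitely many edges at $v_{n_i}$), so $U_{n_1}\cup\dots\cup U_{n_s}$, and hence $K$, is contained in the union of a finite collection of the deleted blocks $B'_n$. This is exactly the assertion of the Proposition.

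The only mildly delicate point is the bookkeeping that makes the $U_n$ genuinely open and precompact while still covering $W$ — one must use collar neighborhoods of the gluing spheres and invoke local finiteness of $G$ to keep the number of collars attached to each $U_n$ finite. No estimates are needed; this is a soft topological argument, and local finiteness of the patterning graph is the hypothesis that makes it go through. (If one prefers, the same conclusion follows by noting that the nerve of the cover $\{B'_n\}$ is $G$, which is connected and locally finite, so any compact set, being covered by finitely many $B'_n$ after the collar adjustment above, meets only finitely many of them.)
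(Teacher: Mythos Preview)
Your proof is correct and uses the same compactness idea as the paper. The paper's version is a touch more direct: instead of fattening each block with collars, it sets $A_n=\mathrm{Int}(B'_1\cup\dots\cup B'_n)$, notes that these nested open sets cover $W$ (each point lies in at most two of the $B'_n$, so is eventually interior), and then compactness of $K$ yields $K\subseteq A_{n_0}\subseteq B'_1\cup\dots\cup B'_{n_0}$ for some $n_0$, bypassing the collar bookkeeping entirely.
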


\begin{proof} Let the deleted $B_n$-blocks whose union is $W$ be $B'_n, n\in{\mathbb N}$, so that $W=\cup_{n=1}^\infty B'_n$ and the interiors of the $B'_n$'s are disjoint.
Let $A_n$ be the interior of $B'_1\cup\dots\cup B'_n$ in $W$. The sets $A_n$ form a nested open cover of $K$ in $W$. Since $K$ is compact there exists an $n_0\geq 1$ such that $K$ is contained in $A_{n_0}$, but no $B'_n$ with $n>n_0$ can meet $A_{n_0}$.
\end{proof}

\begin{lemma}
If a deleted block $B$ of dimension $d\geq 3$  is contained in a
connected $d$-manifold $M$, then $\pi_1(B)$ is isomorphic
to a free factor of $\pi_1(M)$, i.e., there exists a finitely
generated group $H$ such that $\pi_1(M)\approx \pi_1(B)*H$.
\label{lemmafactor}\end{lemma}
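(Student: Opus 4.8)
The plan is to compute $\pi_1(M)$ directly from the decomposition of $M$ into $B$ and the components of its complement, exploiting that every boundary sphere of $B$ is simply connected because $d-1\ge 2$. Write the deleted block as $B$ with $\partial B = S_1\sqcup\cdots\sqcup S_k$, each $S_i\cong S^{d-1}$; since $B$ occurs as a codimension-zero submanifold, we may assume each $S_i$ is a bicollared $(d-1)$-sphere in the interior of $M$. Put $C=\overline{M\smallsetminus\operatorname{int}B}$, so that $M=B\cup C$ and $B\cap C=\partial B$; thickening $B$ and $C$ slightly across the bicollars makes them open with $B\cap C$ homotopy equivalent to $\partial B$, so the Seifert--van Kampen theorem applies. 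The set $C$ need not be connected: let $C_1,\dots,C_\ell$ be its path components. Since $M$ is connected, every $C_j$ meets $\partial B$, and each $S_i$ lies in exactly one $C_j$.

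First I would reduce to the case in which each $C_j$ contains just one of the $S_i$. If some $C_j$ contains two of them, join those two spheres by a tame arc in $C_j$ meeting $\partial B$ only at its endpoints, take a thin bicollar to obtain a $1$-handle $D^{d-1}\times[0,1]\subseteq C_j$, and adjoin it to $B$: this merges the two spheres into a single $(d-1)$-sphere and replaces $B$ by a compact connected codimension-zero submanifold with one fewer boundary component whose fundamental group is $\pi_1(B)*\mathbb Z$ (a $1$-handle joining two boundary components of a connected manifold contributes one free generator, by van Kampen applied to a two-point overlap). Iterating within each $C_j$ yields a compact connected submanifold $B^{+}\subseteq M$ with exactly $\ell$ boundary spheres, one in each $C_j$, and $\pi_1(B^{+})\cong\pi_1(B)*F_{k-\ell}$, with $F_{k-\ell}$ free of rank $k-\ell$. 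Now $M=B^{+}\cup C_1\cup\cdots\cup C_\ell$, the $C_j$ are pairwise disjoint, and $B^{+}\cap C_j$ is a single simply connected sphere, so adjoining the $C_j$ to $B^{+}$ one at a time and applying Seifert--van Kampen at each stage (each time gluing two path-connected spaces along a simply connected $(d-1)$-sphere) gives
\[
\pi_1(M)\;\cong\;\pi_1(B^{+})*\pi_1(C_1)*\cdots*\pi_1(C_\ell)\;\cong\;\pi_1(B)*H,\qquad H:=F_{k-\ell}*\pi_1(C_1)*\cdots*\pi_1(C_\ell).
\]

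This exhibits $\pi_1(B)$ as a free factor of $\pi_1(M)$; and since the rank of a free product is the sum of the ranks of its factors (Grushko--Neumann), $H$ is finitely generated whenever $\pi_1(M)$ is, in particular in the situations in which the lemma will be applied. Equivalently, one may bypass the handle reduction and read off the same conclusion by presenting $\pi_1(M)$ as the fundamental group of the graph of groups whose underlying graph has one vertex for $B$ and one for each $C_j$ and one edge for each $S_i$, all of whose edge groups $\pi_1(S^{d-1})$ vanish. The point that needs the most care is precisely this bookkeeping of the free factor $F_{k-\ell}$ produced by boundary spheres that share a complementary component (i.e.\ by the first Betti number of that graph); the geometric inputs — bicollars of the spheres, and tame arcs with bicollars in a topological manifold — are routine.
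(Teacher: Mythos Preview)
Your argument is correct and takes essentially the same route as the paper: both compute $\pi_1(M)$ by applying Seifert--van Kampen along the simply connected boundary spheres $S_i$ and track the free $\mathbb{Z}$ factors arising when several spheres lie in the same complementary piece. The only cosmetic difference is bookkeeping---you absorb those factors first by attaching $1$-handles to $B$ before gluing the $C_j$, whereas the paper cuts $M$ along all the spheres and then reglues, picking up one free $\mathbb{Z}$ for each remaining sphere pair; your graph-of-groups remark is exactly the common generalization of the two orderings.
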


\begin{proof}
Note that $\partial B = S_1\sqcup\dots\sqcup S_\ell$ is a disjoint
union of $(d-1)$-spheres. Let $B, L_1,\dots, L_r$ be the
connected components obtained from $M$ by cutting
it along the spheres $S_1,\dots, S_\ell$. Let us glue
the pieces $L_i$ to $B'$ successively along $r$ of the
spheres to obtain a connected manifold $L$, with the
remaining spheres (if any) as boundary components of $L$.
Observe that repeated application of the Seifert-van Kampen
Theorem shows that $\pi_1(L)\approx\pi_1(B)*G$ where $G=\pi_1(L_1)*\dots*\pi_1(L_r)$.

Now glue the remaining boundary spheres (if any) together to
obtain $M$. If there remain $s$ pairs of spheres to
be identified to obtain $M$ from $L$, then
$\pi_1(M)\approx \pi_1(L)*{\mathbb Z}^s$, since
each pair that is glued changes the fundamental group
by forming a free product with ${\mathbb Z}$.
Then $\pi_1(M)\approx \pi_1(B)*G*{\mathbb Z}^s$, as desired.
\end{proof}

\begin{lemma} If $B_1,\dots, B_k$ are disjoint
$d$-dimensional deleted blocks
contained in a manifold $W$ of dimension $d\geq 3$
(possibly with boundary)
then  $\pi_1(W)$ is isomorphic to the free product
of two subgroups, $\pi_1(W)=G*H$, with
$G\approx\pi_1(B_1)*\dots *\pi_1(B_k)$.
\label{finiteblocks2}\end{lemma}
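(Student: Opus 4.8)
The plan is to iterate the cut-and-paste argument used to prove Lemma \ref{lemmafactor}, now keeping track of $k$ distinguished pieces at once instead of one. Since the deleted blocks $B_1,\dots,B_k$ are pairwise disjoint, their combined boundary $\Sigma:=\partial B_1\sqcup\dots\sqcup\partial B_k$ is a finite disjoint union of $(d-1)$-spheres, each of which is two-sided in $W$ (the collar of such a sphere inside the block containing it exhibits it as locally flat, hence bicollared). We may assume $\Sigma$ lies in the interior of $W$. Cutting $W$ along $\Sigma$ yields a manifold $\widehat W$ whose connected components are precisely $B_1,\dots,B_k$ together with the components $L_1,\dots,L_r$ of $W\setminus(\operatorname{int}B_1\cup\dots\cup\operatorname{int}B_k)$; each $B_i$ is itself a component because $\partial B_i\subseteq\Sigma$ and the $B_i$ are disjoint (each boundary sphere of $B_i$ has $B_i$ on only one side).

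I would then recover $W$ from $\widehat W=B_1\sqcup\dots\sqcup B_k\sqcup L_1\sqcup\dots\sqcup L_r$ by re-identifying the $\ell$ pairs of boundary spheres created by the cut, one pair at a time, so that at every stage we have a manifold (possibly disconnected, possibly with boundary). Consider the stage at which a pair of spheres $S^{+},S^{-}$ is glued. If $S^{+}$ and $S^{-}$ lie in distinct connected components $M_1,M_2$ of the current manifold, then the Seifert-van Kampen Theorem, together with the connectedness and simple connectedness of $S^{d-1}$ (here $d\geq 3$), shows that the new fundamental group is $\pi_1(M_1)*\pi_1(M_2)$. If $S^{+}$ and $S^{-}$ lie in the same component, then, exactly as in the proof of Lemma \ref{lemmafactor}, the fundamental group acquires a new free factor $\mathbb Z$. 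After all $\ell$ identifications we obtain, for some integer $s\geq 0$,
\[
\pi_1(W)\ \approx\ \pi_1(B_1)*\dots*\pi_1(B_k)*\pi_1(L_1)*\dots*\pi_1(L_r)*\mathbb Z^{\,s}.
\]

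To finish, I would invoke the associativity of the free product and set
\[
G:=\pi_1(B_1)*\dots*\pi_1(B_k),\qquad H:=\pi_1(L_1)*\dots*\pi_1(L_r)*\mathbb Z^{\,s},
\]
so that $\pi_1(W)\approx G*H$ with $G\approx\pi_1(B_1)*\dots*\pi_1(B_k)$, as required. There is no serious obstacle here beyond bookkeeping: one must only verify that every re-identification is of one of the two types above --- which is automatic, since each sphere of $\Sigma$ contributes exactly one pair of boundary spheres to $\widehat W$ --- and that the spheres of $\Sigma$ are two-sided, so that the surgery stays within the category of manifolds; both points are handled just as in Lemma \ref{lemmafactor}. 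The content of the lemma is simply that disjointness of the $B_i$ allows all $k$ free factors to be split off simultaneously.
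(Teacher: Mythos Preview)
Your proposal is correct and follows exactly the route the paper intends: the paper omits the proof, stating only that it ``is similar to that of the previous lemma,'' and your argument is precisely the natural extension of the proof of Lemma~\ref{lemmafactor} to $k$ disjoint deleted blocks, cutting along all boundary spheres at once and regluing one pair at a time while tracking the effect on $\pi_1$ via Seifert--van Kampen. There is nothing to add.
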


The proof of this lemma is similar to that of the previous lemma
and is omitted. The following lemma is proven in a similar way,
using the Mayer-Vietoris exact sequence as in the proof of
Proposition \ref{indecsum}, instead of the
Seifert-van Kampen Theorem.

\begin{lemma} If $B_1,\dots, B_k$ are disjoint
$d$-dimensional deleted blocks
contained in a manifold $W$ of dimension $d\geq 3$
(possibly with boundary) and $2\leq r<d$, then
then  $H_r(W)$ is isomorphic to the direct sum
of two subgroups, $H_r(W)=G\oplus H$, with
$G\approx H_r(B_1)\oplus\dots\oplus H_r(B_k)$.
\label{finiteblocks3}\end{lemma}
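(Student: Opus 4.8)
The plan is to repeat the proof of Lemma~\ref{lemmafactor} with the Mayer--Vietoris sequence in place of the Seifert--van Kampen Theorem, exactly as the remark preceding the statement suggests and in parallel with the computation in the proof of Proposition~\ref{indecsum}. Write $\Sigma = \partial B_1 \sqcup \cdots \sqcup \partial B_k$, a disjoint union of finitely many $(d-1)$-spheres, and let $N$ denote the closure in $W$ of $W \setminus (B_1 \cup \cdots \cup B_k)$; after thickening the $\partial B_i$ to collars, $W$ is covered by the two open sets $B_1 \sqcup \cdots \sqcup B_k$ and $N$, whose intersection deformation retracts onto $\Sigma$. The Mayer--Vietoris sequence of this cover,
\[
\cdots \longrightarrow H_r(\Sigma) \longrightarrow \Bigl(\textstyle\bigoplus_{i=1}^{k} H_r(B_i)\Bigr) \oplus H_r(N) \stackrel{\psi}{\longrightarrow} H_r(W) \longrightarrow H_{r-1}(\Sigma) \longrightarrow \cdots ,
\]
with $\psi$ induced by the inclusions, is the only tool needed.

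In the range $2 \le r \le d-2$ the argument is immediate. Since the reduced homology of $S^{d-1}$ is concentrated in degree $d-1$ and $d \ge 3$ forces $1 \le r-1 \le d-3 < d-1$, we have $H_r(\Sigma) = \bigoplus H_r(S^{d-1}) = 0$ and $H_{r-1}(\Sigma) = \bigoplus H_{r-1}(S^{d-1}) = 0$, so $\psi$ is an isomorphism. Setting $G = \psi\bigl(\bigoplus_i H_r(B_i)\bigr)$ and $H = \psi\bigl(H_r(N)\bigr)$ gives $H_r(W) = G \oplus H$ with $G \cong \bigoplus_i H_r(B_i)$, as desired. This already covers the range in which the lemma is used: in Theorems~A--C one only needs $2 \le r \le d-2$, in fact $r = 2$ with $d \ge 5$.

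The endpoint $r = d-1$ is the delicate case and, I expect, the main obstacle. Here $H_{d-1}(\Sigma) \cong \mathbb Z^{m}$ ($m$ = number of spheres) is nonzero, so the separating spheres genuinely carry $(d-1)$-dimensional homology and the sequence only exhibits $H_{d-1}(W)$ as the cokernel of a map $\varphi \colon H_{d-1}(\Sigma) \to \bigl(\bigoplus_i H_{d-1}(B_i)\bigr) \oplus H_{d-1}(N)$ (note $H_{d-2}(\Sigma) = 0$). To still split off $\bigoplus_i H_{d-1}(B_i)$ one must show that the component of $\varphi$ into $\bigoplus_i H_{d-1}(B_i)$ vanishes, i.e.\ that each boundary sphere $S \cong S^{d-1}$ is null-homologous in the deleted block $B_i$ that it bounds; this can be read off from the long exact sequence of the pair $(B_i, \partial B_i)$ together with the excision isomorphism $H_\ast(B_i, \partial B_i) \cong \widetilde H_\ast(\widehat B_i)$ (where $\widehat B_i$ is the closed block obtained by filling in the removed balls), using the relative fundamental class of $B_i$ --- which requires $\widehat B_i$ to be orientable and, when $B_i$ has several boundary spheres, a little more care, so at this point the statement is cleanest under the orientability hypotheses that hold in the applications. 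Once the boundary spheres die in $\bigoplus_i H_{d-1}(B_i)$, the image of $\varphi$ lies in the summand $H_{d-1}(N)$ and
\[
H_{d-1}(W) \cong \Bigl(\textstyle\bigoplus_{i=1}^{k} H_{d-1}(B_i)\Bigr) \oplus \bigl(H_{d-1}(N)/\mathrm{im}\,\varphi\bigr),
\]
again a splitting of the required form. Apart from this null-homology point, the whole proof is the same bookkeeping as in Lemma~\ref{lemmafactor}, with the simply connected spheres contributing nothing --- now to homology rather than to the free product.
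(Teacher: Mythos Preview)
Your approach is exactly what the paper intends: the remark before the lemma says to replace Seifert--van~Kampen by Mayer--Vietoris, and your argument for $2\le r\le d-2$ is a clean implementation of that, identical in spirit to the computation in the proof of Proposition~\ref{indecsum}. For that range there is nothing to add.

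Your instinct about $r=d-1$ is right, and in fact sharper than you state: the lemma is \emph{false} at $r=d-1$ as written. Take $d=3$, $W=\mathbb{RP}^3$, and let $B_1$ be $\mathbb{RP}^3$ with two disjoint open $3$-balls removed. Mayer--Vietoris for $\mathbb{RP}^3 = B_1\cup(D^3\sqcup D^3)$ gives
\[
H_3(\mathbb{RP}^3)\to H_2(S^2\sqcup S^2)\to H_2(B_1)\to H_2(\mathbb{RP}^3)=0,
\]
i.e.\ $\mathbb Z\to\mathbb Z^2\to H_2(B_1)\to 0$, so $H_2(B_1)\cong\mathbb Z$; but $H_2(W)=0$ cannot contain a $\mathbb Z$ summand. (Note that $\mathbb{RP}^3$ is orientable, so orientability alone does not save you.) The trouble is precisely what you flagged: when a deleted block has $m\ge 2$ boundary spheres, only the diagonal class $(1,\dots,1)\in H_{d-1}(\partial B_i)\cong\mathbb Z^m$ dies in $H_{d-1}(B_i)$, so your map into $\bigoplus_i H_{d-1}(B_i)$ is \emph{not} zero and the proposed splitting of the cokernel does not go through. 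No amount of ``a little more care'' fixes this; the statement simply needs the hypothesis $2\le r\le d-2$ (equivalently $r<d-1$), or some restriction on the number of boundary spheres.

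As you observe, this does not affect the paper: the only use of the lemma is in Case~2 of the proof of Proposition~\ref{finiteblocks}, and in all the concrete constructions (Propositions~\ref{existnonper} and~\ref{recorrence}) one has $r=2$ and $d\ge 5$, well inside the safe range. So your proof for $2\le r\le d-2$ is both correct and sufficient for everything downstream; just drop the attempt to cover $r=d-1$.
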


\begin{proof}[Proof of Proposition \ref{finiteblocks}.] Let $B'_1, \dots, B'_\ell$ be disjoint
deleted non-trivial $B$-blocks contained in a sum-manifold $W$
patterned on a tree $T$ and suppose that only $k$
deleted $B$-blocks are used in constructing $W$ and
all the blocks used are taken from a non-repeating set.
By Proposition \ref{cptfinite}, the compact disjoint union
$B'_1\sqcup \dots\sqcup B'_\ell$ is contained in
a compact sum-manifold $W_1$ patterned on a
finite subtree $T_1$ using $k_1\leq k$ deleted $B$-blocks.
Let $v_1,\dots,v_n$ be all the
vertices of $T_1$, and let $B_i$ replace the vertex $v_i$ in the construction of $W_1$
We distinguish two cases.

Case 1. $\pi_1(B)\neq 1$. By the Seifert-van Kampen Theorem $\pi_1(W_1)$
is the free product $\pi_1(B_1)*\dots *\pi_1(B_n)$.
By property (1) of Definition
\ref{nonrepset}, some indecomposable factor of
$\pi_1(B)$ appears only $k_1$ times in the free product
decomposition of $\pi_1(W_1)$ into indecomposable factors.
On the other hand, by Lemma \ref{finiteblocks2}, this
group must appear at least $\ell$ times, so $\ell\leq k_1
\leq k$.

Case 2. $\pi_1(B)$ is trivial. Since the simply connected
block $B$ is non trivial, by property (2)
of Definition \ref{nonrepset},
for some dimension $r$ with $2\leq r <d$,
$H_r(B)$ is non-trivial and contains a non-trivial direct summand
${\mathbb Z}_{p^s}$, so that no other block in the
non-repeating set has a summand isomorphic
${\mathbb Z}_{p^s}$ in its $r$th
homology. By Lemma \ref{finiteblocks3},
$H_r(W_1)$ contains at least $\ell$ direct summands isomorphic
to ${\mathbb Z}_{p^s}$.
Now by the Mayer-Vietoris exact sequence
$H_r(W)$ is the direct sum of the
$r$th homology groups of the blocks used in
constructing $W_1$. Hence the group ${\mathbb Z}_{p^s}$
occurs exactly $k_1$ times in the direct sum decomposition
of $H_r(W)$ into cyclic groups, so again $\ell\leq k_1\leq k$.
\end{proof}

%%%%%%%%%%%%%%%%%%%%%%%%%%%%%%%%%%%%%%%%%%%%%%%%%%%%%%%%%%%%%%%%%%%%%%%%%%%%%%%%%%%%%%%%%%%%%%%%%%% Section 4

\section{Manifolds that are non-per\-iodic in homotopy and homology}\label{nonpersec}

\begin{defi} A  $(k-1)$-connected manifold $M$ is {\bf non-periodic in homotopy} in dimension $k\geq 2$ if its $k$th homotopy group $\pi_k(M)$ is isomorphic to the direct sum of
cyclic groups of odd prime power order, where for an infinite
number of odd prime powers $p^k$, the number of summands
of order $p^k$ is finite but non zero.
A $k$-manifold is
{\bf non-periodic in homology} in dimension $k\geq 2$ if its $k$th homology group $H_k(M)$ is isomorphic to a direct sum of cyclic groups satisfying the same property.
\label{nonperiodic}
\end{defi}

Using blocks obtained in Proposition \ref{existblocks} it is
easy to construct many $(k-1)$-connected sum-manifolds
that are non-periodic in homotopy (and in homology) of dimension $k\geq 2$.

\begin{proof}[Proof of Proposition \ref{existnonper}.] Let $(p_n)_{n\in \Nset}$ be a sequence of prime numbers
such that infinitely many primes occur finitely many
times in the sequence and let $T$ be a countable tree with vertices $\{v_n\}_{n\in\Nset}$. Given $d\geq 5$, let $B_n$ ($n\in {\mathbb N}$) be blocks of dimension $d$
such that $\pi_2(B_n)$ is ${\mathbb Z}_{p_n} \oplus {\mathbb Z}_{p_n}$
or ${\mathbb Z}_{p_n}$, as obtained in Proposition \ref{existblocks}. Now $H_2(B_n)\approx \pi_2(B_n)$ by the Hurewicz Theorem. Since the deleted blocks are identified along $(d-1)$-spheres with $d\geq 5$, the Mayer-Vietoris sequence shows that
$H_2(W)$ is the direct sum of the groups $H_2(B_n)$ and this sum is isomorphic to $\pi_2(W)$. (This argument is given in more
detail in the proof of Proposition \ref{recorrence} below.)

The $d$-manifold $W$ constructed as a connected sum patterned on $T$ using the blocks $B_n$ is clearly non-periodic in homotopy and in homology. \end{proof}

\begin{prop} Sum-manifolds that are non-periodic in homotopy or in homology satisfy the hypotheses of Theorem A.\label{recorrence}
\end{prop}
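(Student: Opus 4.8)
\medskip
\noindent\emph{Proof plan.} The plan is to verify, one at a time, the three hypotheses of Theorem~A for a sum-manifold $W$ that is non-periodic in homotopy (resp.\ in homology) in some dimension $k\geq 2$; throughout, write $G_k$ for $\pi_k$ (resp.\ for $H_k$). By Definition~\ref{nonperiodic} the manifold $W$ is $(k-1)$-connected and $G_k(W)$ is a direct sum of cyclic groups of odd prime power order in which an infinite set $\mathcal{P}$ of odd prime powers each occurs a finite and nonzero number of times; in particular $G_k(W)$ is not finitely generated, so $W$ is not a finite complex and hence is non-compact. Since $k\geq 2$, the manifold $W$ is simply connected. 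Exhausting the pattern graph of $W$ by finite connected subgraphs, and $W$ correspondingly by compact sub-sum-manifolds $W_m$ (as in the proof of Proposition~\ref{cptfinite}), van Kampen's theorem, applied to each $W_m$ and then passed to the limit, shows that $\pi_1(W)$ is the free product of the fundamental groups of the blocks with a free group of rank equal to the first Betti number of the graph. Thus $\pi_1(W)=1$ forces that graph to have no cycles --- so it is a tree --- and forces every block to be simply connected; the tree is infinite because $W$ is non-compact. This establishes the first two hypotheses of Theorem~A, the fundamental group of each block being trivial.

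\medskip
The next step is to identify $G_k(W)$. Write $B_v$ for the block at the vertex $v$ of the pattern tree $T$, and $T=\bigcup_m T_m$ for an exhaustion by finite subtrees. Then $W_m$ is homeomorphic to the connected sum $\#_{v\in T_m} B_v$ with finitely many disjoint open $d$-balls deleted, so by the Mayer--Vietoris computation used in the proof of Proposition~\ref{indecsum} --- deleting a ball, or forming a connected sum along $S^{d-1}$, leaves $H_r$ unchanged for $2\leq r\leq d-2$ --- one has $H_k(W_m)\cong\bigoplus_{v\in T_m} H_k(B_v)$, whence $H_k(W)\cong\bigoplus_{v\in T} H_k(B_v)$ because singular homology commutes with this direct limit. (Here necessarily $2\leq k\leq d-2$: the cases $k\geq d$ are excluded since the non-compact highly connected $W$ has $G_k(W)=0$ there, and $k=d-1$ since every block would then be a homotopy $d$-sphere by Poincar\'e duality and the generalized Poincar\'e conjecture, forcing $H_{d-1}(W)$ to be torsion-free.) In the homotopy case the Hurewicz theorem, applied to the $(k-1)$-connected $W$, gives $\pi_k(W)\cong H_k(W)$, so in both cases $G_k(W)\cong\bigoplus_{v\in T} H_k(B_v)$. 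Since $G_k(W)$ is a torsion group with only odd-primary torsion, each direct summand $H_k(B_v)$ is likewise such a group and, being finitely generated, is a finite group whose cyclic factors all have odd prime power order; and for each prime power $q$ the number $N(q)$ of $\mathbb{Z}_q$-summands of $G_k(W)$ equals the sum over $v\in T$ of the numbers of $\mathbb{Z}_q$-summands of the $H_k(B_v)$.

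\medskip
The last step produces infinitely many non-homeomorphic blocks that repeat finitely in $W$. Fix $q\in\mathcal{P}$, and suppose that some block $B$ having $\mathbb{Z}_q$ as a direct summand of $H_k(B)$ repeats infinitely in $W$. Then, for every $m$, Lemma~\ref{finiteblocks3} applied to $m$ pairwise disjoint deleted $B$-blocks exhibits $(H_k(B))^{\oplus m}$, and therefore $(\mathbb{Z}_q)^{\oplus m}$, as a direct summand of $H_k(W)$; hence $N(q)=\infty$, contradicting $q\in\mathcal{P}$. So every such $B$ repeats finitely (it does occur in $W$, being the block at some vertex). Now let $\mathcal{B}$ be the set of homeomorphism types among the blocks $B_v$, $v\in T$, for which $\mathbb{Z}_q$ is a direct summand of $H_k(B_v)$ for some $q\in\mathcal{P}$; every member of $\mathcal{B}$ repeats finitely in $W$. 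If $\mathcal{B}$ were finite, the finitely many finite groups $H_k(B)$ with $B\in\mathcal{B}$ would together involve only finitely many prime powers; but $N(q)>0$ for every $q\in\mathcal{P}$, so for each $q\in\mathcal{P}$ some $H_k(B_v)$ has a $\mathbb{Z}_q$-summand, which places $B_v$ in $\mathcal{B}$ and $q$ among those finitely many prime powers --- contradicting the infinitude of $\mathcal{P}$. Hence $\mathcal{B}$ is infinite, which verifies the third hypothesis of Theorem~A, and the proof is complete.

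\medskip
The main technical bookkeeping is the second step --- the direct-limit Mayer--Vietoris identification $G_k(W)\cong\bigoplus_{v\in T} H_k(B_v)$, together with the Hurewicz reduction in the homotopy case. The conceptual heart, however, is the final step, in which the condition ``infinitely many odd prime powers, each occurring only finitely often'' is converted, by way of Lemma~\ref{finiteblocks3}, into infinitely many non-homeomorphic blocks that repeat finitely.
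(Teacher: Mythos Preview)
Your proof is correct and rests on the same core mechanism as the paper's --- the Mayer--Vietoris identification $H_k(W)\cong\bigoplus_v H_k(B_v)$ together with Lemma~\ref{finiteblocks3} to bound the number of disjoint deleted $B$-blocks by the multiplicity of the relevant cyclic summand --- but it is considerably more thorough than the paper's own argument. The paper's proof tacitly restricts to $k=2$ and to the specific blocks produced in Proposition~\ref{existblocks}, simply \emph{assumes} that $W$ is patterned on a tree with simply connected blocks, and stops after showing that each individual $B_i$ (with $R_i$ finite) repeats finitely, leaving implicit that infinitely many homeomorphism types arise. You instead start from the intrinsic hypothesis on $G_k(W)$ and \emph{derive} everything: the van Kampen/direct-limit step forces the pattern graph to be a tree and all blocks to be simply connected; the range $2\le k\le d-2$ is justified; and the pigeonhole argument on $\mathcal{B}$ explicitly yields infinitely many non-homeomorphic finitely-repeating blocks. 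So the route is the same, but your version actually proves the proposition as stated rather than only for the examples constructed in Proposition~\ref{existnonper}.
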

The proof of this proposition will be given in the next Section. Together with Theorem A it establishes Theorem B.

%%%%%%%%%%%%%%%%%%%%%%%%%%%%%%%%%%%%%%%%%%%%%%%%%%%%%%%%%%%%%%%%%%%%%%%%%%%%%%%%%%%%%%%%%%%%%%%%%%% Section 5

\section{Construction of blocks and proof of Proposition \ref{existblocks}}\label{pconstruct}

In this section we construct simply connected blocks with
given homology groups in dimension $2$ to prove the second
part of Proposition \ref{existblocks}.

\begin{proof}[Proof of Proposition \ref{existblocks}.]
In case the dimension $d$ is $5$, the main theorem of Smale \cite{Sm} states that for every finitely generated free abelian group $F$ and finite abelian group $G$ there is a $1$-connected smooth closed $5$-manifold $B$ with $\pi_2(B)\approx F\oplus G\oplus G$. If we take $F$ to be trivial and
set $G={\mathbb Z}_p$ then $\pi_2(B)\approx {\mathbb Z}_p\oplus {\mathbb Z}_p$, as desired.

For $d\geq 6$ let $f:S^1=\partial D^2\to \Sset^1$ be a smooth non-singular map of degree $p$,
where $D^m$ denotes the closed $m$-dimensional disk,
so that $H_1(\Sset^1\cup_fD^2)\approx {\mathbb Z}_p$.
The $2$-dimensional complex $\Sset^1\cup_fD^2$ can be embedded in ${\mathbb R}^5$ and so its suspension $K$ can be considered a subset of $\Rset^d$,
$$K=\Sset^2\cup_{Sf}  D^3 = S(\Sset^1\cup_{f}  D^2)\subset \Rset^6\subseteq\Rset^d.$$
Take a closed tubular neighborhood $V$ of $K$ in $\Rset^d$ and identify the boundaries of two copies of $V$, say $V_1$ and $V_2$, to obtain the double of $V$, a compact $1$-connected $d$-manifold which we denote by $B$. Note that $\partial V$ is $1$-connected, since a loop in
$\partial V$ has a contraction in $V$, and the contracting singular surface can be pushed off $K$ and into $\partial V$. The inclusion $\partial V\hookrightarrow V_i$ induces a surjective homomorphism on the second homology $H_2(\partial V)\to H_2(V_i), i=1,2$, for the same dimensional reason.
Then the Mayer-Vietoris sequence
$$\cdots\to H_2(\partial V)\to H_2(V_1)\oplus H_2(V_2)\to H_2(B)\to H_1(\partial V)
\to \cdots$$
shows that $H_2(B)\approx {\mathbb Z}_p$, so by the Hurewicz theorem, $\pi_2(B)\approx {\mathbb Z}_p$.

Now it is not clear that the blocks constructed with either
${\mathbb Z}_{p^k}$ or ${\mathbb Z}_{p^k}\oplus {\mathbb Z}_{p^k}$
as second homology are prime, but by Proposition \ref{indecsum}
they can be expressed as connected sums of a finite number
of prime blocks, one of which must have
$H_2(B)$ isomorphic to
${\mathbb Z}_{p^k}$ or ${\mathbb Z}_{p^k}\oplus {\mathbb Z}_{p^k}$.
\end{proof}

The proof of the Proposition can easily be adapted to show the existence of prime $(k-1)$-connected blocks of dimension $k+3$ or greater whose $k$th homotopy group is ${\mathbb Z}_{p^k}$.

%[Proof of Proposition \ref{pblock}.]
\begin{proof}[Proof of Proposition \ref{recorrence}.] Let $W$ be a connected sum of blocks $B_n$, $n\in {\mathbb N}$, patterned on a tree $T$, such that $\pi_2(B_n)$ is isomorphic to $G_n$, where these groups satisfy the conditions of Definition \ref{nonperiodic}.
Let $i$ be one of the infinitely many indices for which $R_i=\{n\in \Nset;\ p_n=p_i\}$ is finite. When $d\geq 6$, take $k$ to be greater than the number of elements in $R_i$. We claim that $W$ cannot contain $k$ deleted blocks corresponding to $B_i$, so that  $B_i$ repeats finitely in $W$.

 Suppose, to obtain a contradiction, that $W$ contains $k$ disjoint submanifolds with boundary, say $W_1,\ldots, W_k$, each homeomorphic to $B_i$ minus a finite set of open $d$-balls, where $k$ is greater than the number of elements of $R_i$. Let $Y$ be the closure of $W\setminus (W_1\cup \ldots \cup W_k)$. Thus, $Y\cap (W_1\cup\ldots \cup W_k)$ is a disjoint union of $(d-1)$-spheres. Applying the exact sequence of Mayer-Vietoris to $W=Y\cup (W_1\cup \ldots \cup W_k)$ we have:
\begin{displaymath}
0 \to H_2(Y)\oplus H_2(\stackrel{k}{\mathop{{\cup}}\limits_{l=1}}W_l) \stackrel{\varphi}{\mathop{\to}} H_2(W)
\end{displaymath}
so the homomorphism $\varphi$ is injective. In particular, $$H_2(\stackrel{k}{\mathop{{\cup}}\limits_{l=1}} W_l)\simeq    \underbrace{\Zset_{p_i}\oplus \ldots \oplus \Zset_{p_i}}_{k-\textrm{times}}$$ injects into $H_2(W)$, but that is a contradiction because $H_2(W)$ has less than $k$ copies of $\Zset_{p_i}$.

For the case $d=5$, each group $H_2(B_n)$ with $n\in R_i$ is
isomorphic to ${\mathbb Z_{p_i}}\oplus {\mathbb Z_{p_i}}$, so a similar argument works if we take
$k$ greater than twice the number of elements of $R_i$.
\end{proof}

We note that for $k\geq 3$ there is an analogous construction for
 $(k-1)$-connected blocks $B$ of dimension  $d\geq k+4$ with $\pi_k(B)\approx {\mathbb Z_p}$ by using the $(k-1)$st suspension of the embedding
 $\Sset^1\cup_{f}  D^2\hookrightarrow \Rset^5$. We obtain
open $(k+4)$-manifolds that are non-periodic in homotopy (and in
homology) of dimension less than or equal to $k$, possibly mixing homotopy
and homology groups in distinct dimensions. It is easy to adapt the proof of Theorem B to show that manifolds that are non-periodic in homotopy
and homology of higher dimensions cannot be codimension one leaves in compact manifolds.

%%%%%%%%%%%%%%%%%%%%%%%%%%%%%%%%%%%%%%%%%%%%%%%%%%%%%%%%%%%%%%%%%%%%%%%%%%%%%%%%%%%%%%%%%%%%%%%%%%% Section 6

\section{Non-leaves of foliations of codimension one}\label{nonleafproof}

In this section we give the proof of Theorem A which implies  Theorems B and C. The proof is a small adaptation of the proof of Ghys' main theorem in \cite{Gh} (which it generalizes), although the final arguments, involving blocks that repeat finitely, are simpler.

Throughout this section we
let $L$ be a manifold that satisfies the hypotheses of
Theorem A, so it is a connected sum of blocks
patterned on a countable tree. Furthermore infinitely many blocks repeat finitely and every block has a fundamental group generated by torsion elements with no $2$-torsion (possibly the block may be simply connected).
We suppose that $L$ is a leaf of a $C^0$ foliation  $\Fol$  of codimension one of a compact manifold $M$, in order to arrive at a contradiction.

Since every block has a fundamental group generated by torsion elements with no $2$-torsion (or possibly trivial), each block must
have trivial holonomy in $\Fol$, since every torsion element of odd order has trivial holonomy and then the holonomy of the whole leaf $L$ must be trivial.
We may assume that $\Fol$ is transversely oriented, by passing to a double cover if necessary. Under these conditions the leaf $L$ is proper. In fact, as in Lemma 4.3 of Ghys \cite{Gh}, we can take a block of $L$ which repeats finitely and as the holonomy of $L$ is trivial the Reeb Stability Theorem provides us with an embedding of a product neighborhood of this block on $M$. Thus, each transversal section induced by the embedding intercepts the leaf $L$ in a
finite number of points.

Fix a one-dimensional foliation $\Nfol$ transverse to $\Fol$
(see \cite{HH}, Theorem 1.1.1, pp. 2-3).
Given an open saturated set $U\subset M$, there is a manifold $\widehat{U}$ with boundary and corners, called the {\bf completion} of $U$, and an immersion $i:\widehat{U}\to M$ such that $U$ is the interior of $\widehat U$ and $i$ restricted to $U$ is the inclusion of $U$ in $M$ (see \cite{HH} pp. 87--88 for the explicit construction of $\widehat{U}$). The foliations $i^*\Fol$ and $i^*\Nfol$  agree with $\Fol$ and $\Nfol$ on $U$.

\begin{theo}[Dippolito \cite{Di}, Hector \cite{HH}]
Under the above hypotheses, there is a compact manifold with boundary and corners $K$ in $\widehat{U}$ such that $\partial K=\partial^{tg}\cup \partial^{tr}$ where
\begin{enumerate}
\item $\partial^{tg}\subset \partial \widehat{U}$;
\item $\partial^{tr}$ is saturated by the foliation $i^*\Nfol$;
\item the complement of the interior of $K$ in $\widehat{U}$ is a finite union of connected non-compact submanifolds $B_i$ with boundary and corners, and there are non-compact manifolds with boundary $S_i$ so that each $B_i$ is homeomorphic to $S_i\times [0,1]$ by a homeomorphism $\phi_i:S_i\times [0,1]\to B_i$ that takes $\{\ast\}\times [0,1]$ to a leaf of $i^*\Nfol$.
\end{enumerate}\end{theo}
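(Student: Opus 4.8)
The plan is to reproduce Dippolito's ``octopus'' (nucleus--and--arms) decomposition of the completion $\widehat U$ of an open saturated set; as the statement is classical I would organize the proof around the compactness of $M$ and a fixed finite atlas of flow boxes rather than proving everything from scratch.

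First I would fix a finite cover $W_1,\dots,W_N$ of the compact manifold $M$ by foliation charts in which $\Fol$ appears as the family of horizontal slices and $\Nfol$ as the family of vertical arcs, together with a Lebesgue number $\lambda>0$ for this cover. Pulling the charts back through the immersion $i$ produces an atlas on $\widehat U$ in which $i^*\Fol$ and $i^*\Nfol$ are again horizontal and vertical, and in which the tangential boundary $\partial\widehat U$ is a union of pieces of leaves of $i^*\Fol$ (the ``border leaves'' of $U$), meeting one another only along corners. For each $p\in\widehat U$ I would also record a number $\tau(p)\ge 0$ measuring the transverse thickness of $\widehat U$ at $p$ --- say, the length of a maximal $i^*\Nfol$-arc in $\widehat U$ through $p$ --- which is finite and small near the ends of $\widehat U$, since there $U$ accumulates on a single border leaf.

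The core step is to carve out the ``transversally thick'' part of $\widehat U$. For small $\varepsilon>0$ I would take $K$ to be the $i^*\Fol$-saturation of $\{\,\tau\ge\varepsilon\,\}$, thickened by a short collar in the $i^*\Nfol$-direction and along $\partial^{tg}$, and then verify --- using the compactness of $M$ and the finiteness of the flow-box atlas --- that for $\varepsilon$ small enough $K$ is a compact submanifold with boundary and corners. By construction $\partial K$ splits as $\partial^{tg}\cup\partial^{tr}$, where $\partial^{tg}\subset\partial\widehat U$ is tangent to the foliation and $\partial^{tr}$ is a union of (arcs of) leaves of $i^*\Nfol$, because one cuts $\widehat U$ along complete transverse arcs. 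Each component $B_i$ of $\widehat U\setminus\operatorname{int}K$ is then a region on which $\tau$ is small and which meets only finitely many of the charts $W_j$; I would trivialize it by flowing its cross-section $S_i=B_i\cap K$ off itself along $i^*\Nfol$ for a uniform length of time, obtaining $\phi_i\colon S_i\times[0,1]\to B_i$ carrying $\{\ast\}\times[0,1]$ to leaves of $i^*\Nfol$.

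I expect the genuinely delicate points to be the two finiteness assertions --- that there are only finitely many arms $B_i$, and that each of them meets only finitely many of the flow boxes --- together with the claim that the transverse flow used to trivialize a given arm is defined for a uniform time even though the border leaves bounding that arm may carry nontrivial holonomy. All three are consequences of the compactness of $M$ and the fixed finite adapted atlas, handled as in Dippolito \cite{Di} (see also Hector \cite{HH}), and I would cite those references for the technical details rather than reproduce them.
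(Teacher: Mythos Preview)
The paper does not give its own proof of this theorem: it is stated with attribution to Dippolito \cite{Di} and Hector--Hirsch \cite{HH} and then used as a black box in the proof of Theorem~A. So there is nothing to compare your argument against in the paper itself; your sketch is essentially an outline of the classical ``octopus decomposition'' that one finds in those references, and you yourself say you would cite them for the technical details.

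That said, one step in your outline would not work as written. You propose to take $K$ to be ``the $i^*\Fol$-saturation of $\{\tau\ge\varepsilon\}$''. Since leaves of $i^*\Fol$ in $\widehat U$ are typically non-compact (indeed, the leaf $L$ you care about is open), an $\Fol$-saturation of any set with nonempty interior will generally fail to be compact. The standard construction does not saturate by $\Fol$; rather, one fixes a finite biregular cover of $M$, pulls the flow boxes back to $\widehat U$ via $i$, and lets $K$ be the union of those finitely many lifted boxes whose transverse $i^*\Nfol$-arcs have full length (equivalently, run from one component of $\partial\widehat U$ to the other). Compactness of $K$ then follows because only finitely many lifts can have this property, and the arms $B_i$ are the remaining pieces, each of which is transversally thin and hence trivialized by the holonomy flow along $i^*\Nfol$. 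Your function $\tau$ and the intuition behind it are correct, but the nucleus should be built from finitely many flow boxes, not from a leaf-saturation.
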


The compact set $K$ is the called the {\bf kernel} of $\widehat{U}$ and the submanifolds $B_i$ are the {\bf branches} of $\widehat{U}$. The foliation $i^*\Fol$ restricted to a branch $B_i$ is defined by the suspension of a representation of the fundamental group of $S_i$ into the group of orientation preserving homeomorphisms of the interval $[0,1]$.

\begin{figure}[H]
\centering
 \psfrag{B1}{$B_1$}
 \psfrag{B2}{$B_2$}
 \psfrag{S1}{$S_1$}
 \psfrag{S2}{$S_2$}
 \psfrag{K}{$K$}
\includegraphics*[width=.7\linewidth]{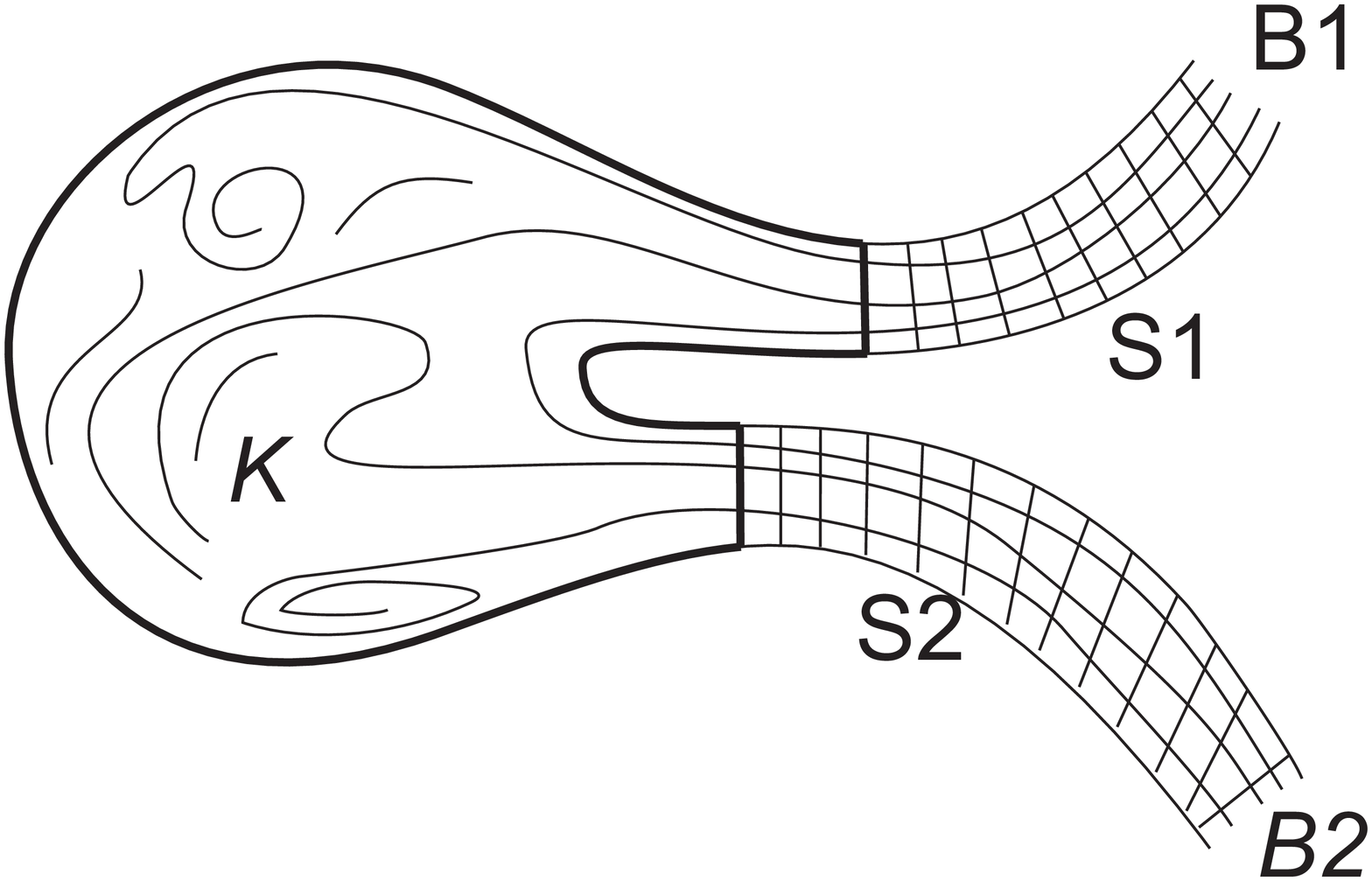}
\caption{The completion of a saturated open set.}\label{fig:completion}
\end{figure}

\begin{prop} (An extension of Reeb stability) Let $L$ be a leaf in the boundary $\partial\widehat U$, where $U$ is a connected saturated
open set of a transversely oriented codimension one foliation
of a compact manifold. If $L$ is a connected sum of
blocks (a sum-manifold as in Definition \ref{blocks}),
and each block has a fundamental
group that is trivial or generated by torsion, then $L$ has a
one-sided product foliated neighborhood in $\partial\widehat U$.
\label{extendedRS} \end{prop}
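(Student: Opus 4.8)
The plan is to reduce Proposition \ref{extendedRS} to the classical Reeb Stability Theorem by showing that the holonomy of $L$, viewed as a boundary leaf, is trivial (or at least trivial on the relevant side), so that a transversal arc on the product side can be propagated to a genuine product neighborhood. First I would fix a block $B_0$ of $L$ that is embedded as a deleted block in $L$, and recall that by hypothesis $\pi_1(B_0)$ is trivial or generated by torsion elements; in either case every element of $\pi_1(B_0)$ maps under the (one-sided, since $L\subset\partial\widehat U$) holonomy representation to a germ of an orientation-preserving homeomorphism of a half-open interval $[0,\epsilon)$ fixing $0$, and such a germ has finite order only if it is the identity (a nontrivial germ fixing $0$ on one side is either topologically attracting or repelling near $0$, hence of infinite order). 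Therefore the holonomy of $L$ restricted to each block is trivial.

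The second step is to pass from ``trivial holonomy on each block'' to ``trivial holonomy of $L$ as a whole.'' Here I would use Lemma \ref{lemmafactor} (and its relatives): the holonomy representation $h\colon\pi_1(L)\to\mathrm{Homeo}^+([0,\epsilon),0)$ is a homomorphism, and $\pi_1(L)$ is generated by the images of the $\pi_1(B_n)$ together with (in the non-simply-connected graph case, but here the tree case) nothing else — for a sum-manifold patterned on a tree, $\pi_1(L)$ is the free product of the $\pi_1(B_n)$. Since $h$ kills every free factor $\pi_1(B_n)$ by Step 1, $h$ is trivial. (If one only assumes $L$ is an arbitrary sum-manifold as in the statement, one still has that $\pi_1(L)$ is generated by the $\pi_1(B_n)$ together with free $\mathbb Z$ factors coming from loops through the graph; but a free $\mathbb Z$ factor could in principle have nontrivial holonomy, so strictly the tree hypothesis — or the hypothesis that $H_1$ contributes no extra holonomy — is what one needs. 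I would state the proof for the tree case, which is all that Theorem A requires, and remark that the graph case needs an extra argument or hypothesis.)

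The third step is the geometric conclusion. With trivial holonomy along $L$, apply the generalized Reeb Stability Theorem of Dippolito--Hector (the theorem quoted just above the proposition) to $\widehat U$: since $L$ lies in $\partial\widehat U$ and $\Fol$ is transversely oriented, a half-open leaf $\{\ast\}\times[0,1)$ of $i^*\Nfol$ issuing from a point of $L$ defines, by following nearby leaves and using triviality of holonomy, a foliated product $L\times[0,\delta)\hookrightarrow\widehat U$ with $L\times\{0\}=L$; the image of this embedding is the desired one-sided product foliated neighborhood in $\partial\widehat U$. Concretely, one covers $L$ by finitely many foliated charts adapted to $\Nfol$, uses triviality of holonomy to glue the local products consistently (the gluing cocycle is exactly the holonomy, which is trivial), and takes $\delta$ small enough that the union stays inside $\widehat U$.

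The main obstacle, and the only place real care is needed, is Step 1 — specifically the claim that a nontrivial one-sided germ fixing the origin cannot have finite order, combined with the fact that odd-order-torsion (no $2$-torsion) is exactly what rules out the one remaining subtlety: an \emph{orientation-reversing} germ. On a half-open interval $[0,\epsilon)$ every homeomorphism fixing $0$ is automatically orientation-preserving, so in fact the one-sidedness makes even the $2$-torsion harmless here; but I would nonetheless phrase the argument so that it also applies when $L$ sits two-sidedly inside $M$ (as in the main proof of Theorem A in Section \ref{nonleafproof}), where orientation-reversing holonomy of order $2$ is the genuine exception — which is why the hypothesis throughout that section is torsion of \emph{odd} order. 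I expect the write-up of this germ-order lemma (essentially: a homeomorphism germ of $([0,\epsilon),0)$ of finite order is the identity) to be one or two sentences, citing the standard fact or giving the attracting/repelling dichotomy directly.
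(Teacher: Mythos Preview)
Your Steps 1 and 2 correctly show that loops lying inside blocks have trivial (one-sided) holonomy, but the real content of the proposition---and the place your argument breaks---is Step 3. Trivial holonomy of a \emph{noncompact} leaf does not by itself produce a one-sided product neighborhood: the classical Reeb argument needs compactness to extract a uniform transverse thickness, and your sentence ``one covers $L$ by finitely many foliated charts \dots\ and takes $\delta$ small enough'' fails exactly because $L$ is open and admits no finite cover by foliation charts. Local product neighborhoods certainly exist everywhere once holonomy is trivial, but their widths may shrink to zero toward the ends of $L$, and nothing in your argument prevents this. The Dippolito--Hector theorem you invoke is a \emph{structure} theorem for $\widehat U$ (compact kernel plus product branches), not a stability statement, so merely citing it does not close the gap.

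The paper's proof uses that structure theorem in the essential way you are missing. After enlarging the kernel $K$ slightly so that $L\cap K$ is a finite union of deleted blocks (via Proposition~\ref{cptfinite}), classical Reeb stability applies to the \emph{compact} set $L\cap K$ and gives a one-sided product neighborhood inside $K$. In each branch $B_i\cong S_i\times[0,1]$ the induced foliation is the suspension of a representation of $\pi_1(S_i)$ into \emph{globally defined} orientation-preserving homeomorphisms of $[0,1]$; since $L\cap B_i$ is again a connected sum of blocks, torsion-generation forces this representation to be trivial, so the whole branch is already a foliated product of full width. Gluing the kernel product to the branch products yields the global one-sided neighborhood. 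In short, the Dippolito branches supply the product structure at infinity \emph{for free}, which is precisely what a holonomy computation on the noncompact leaf cannot provide on its own.
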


\begin{proof} Note that only a finite
number of blocks can meet a neighborhood of the kernel $K$, according to Proposition
\ref{cptfinite}. Hence we can enlarge the nucleus $K$ a
little so that $L\cap K$ is a union of a finite
number of deleted blocks. Then for each branch $B_i$, the
intersection $B_i\cap L$ is also a connected sum of blocks, so its
fundamental group is trivial or generated by torsion elements.
Since the holonomy of $\Fol |_{B_i}$ is globally defined, the foliation $\Fol |_{B_i}$ is a product foliation.
Similarly, by the usual Reeb stability theorem,
the compact set $L\cap K$ has trivial holonomy and
therefore has a one-sided product foliated neighborhood in $K$.
Combining the product foliated neighborhoods in the kernel
$K$ and the blocks $B_i$ gives a one-sided product foliated
neighborhood of $L$ in $\widehat U$.
\end{proof}

\begin{lemma}\label{localprod}
The leaf $L$ possesses an open neighborhood saturated by $\Fol$ which is homeomorphic to $L\times (-1,1)$ by a homeomorphism that takes $L\times \{\ast\}$ to leaves of $\Fol$ and $\{\ast\}\times (-1,1)$ into leaves of $\Nfol$.
\end{lemma}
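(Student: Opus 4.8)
The plan is to use the proper-ness of $L$ together with the transverse orientability of $\Fol$ to realize a two-sided neighborhood of $L$ by combining the one-sided product neighborhoods produced by Proposition \ref{extendedRS}. The essential point is that $L$ sits properly embedded in the compact manifold $M$, so along the transverse one-dimensional foliation $\Nfol$ the leaf $L$ separates nearby leaves into a ``positive side'' and a ``negative side''; since $\Fol$ is transversely oriented, these two sides are globally well-defined and we can work with each side independently and then glue.

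First I would observe that $L$, being proper, admits a saturated open set $U^+ \subset M$ whose boundary contains $L$, obtained by taking $U^+$ to be the union of the positive-side plaques close to $L$ (concretely: the union over $x \in L$ of the positive half-open segment of the $\Nfol$-leaf through $x$ of length determined by a foliated chart, intersected with an exhaustion of $L$ by compacta and saturated). The key input is that $L$ is a sum-manifold all of whose blocks have fundamental group trivial or generated by torsion, so Proposition \ref{extendedRS} applies to $L \subset \partial \widehat{U^+}$ and gives a one-sided product foliated neighborhood of $L$ inside $\widehat{U^+}$, i.e.\ a homeomorphism of a saturated neighborhood of $L$ on the positive side onto $L \times [0,1)$ carrying $L \times \{0\}$ to $L$, leaves of $\Fol$ to $L \times \{t\}$, and fibers $\{\ast\} \times [0,1)$ into $\Nfol$-leaves. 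Symmetrically, on the negative side one obtains a homeomorphism onto $L \times (-1,0]$ with the same properties.

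Next I would glue the two one-sided neighborhoods along $L = L \times \{0\}$ to get a neighborhood $N$ of $L$ in $M$ homeomorphic to $L \times (-1,1)$, saturated by $\Fol$ (being a union of two saturated pieces meeting exactly in the leaf $L$), with $L \times \{t\}$ mapped into leaves of $\Fol$; one may further arrange, by flowing along $\Nfol$ and reparametrizing each transverse segment, that the slices $\{\ast\} \times (-1,1)$ map into single leaves of $\Nfol$. The only point requiring care in the gluing is continuity of the resulting map across $L \times \{0\}$: this follows because both one-sided homeomorphisms restrict to the identity on $L$ (up to composing with a self-homeomorphism of $L$) and the product structures are compatible with the foliated charts of $\Fol$ near $L$, which are two-sided.

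The main obstacle I anticipate is not the gluing but verifying that Proposition \ref{extendedRS} genuinely applies, i.e.\ that the saturated open set $U^+$ is connected and that $L$ really lies in $\partial \widehat{U^+}$ rather than in the interior --- this is where proper-ness of $L$ (established earlier in the section via Reeb stability applied to a finitely-repeating block) is indispensable, since an improperly embedded leaf could accumulate on itself and fail to bound a product neighborhood on either side. Once properness is in hand, the separation of the two local sides by transverse orientability, the application of Proposition \ref{extendedRS} to each side, and the gluing are all routine, so the proof reduces to assembling these ingredients.
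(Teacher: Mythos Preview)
Your proposal is correct and follows essentially the same route as the paper: construct a one-sided saturated open set on each side of $L$, apply Proposition \ref{extendedRS} to obtain one-sided product neighborhoods, and glue along $L$. The paper's construction of the one-sided open set is slightly more concrete---it takes a positive transversal $\tau:[0,1)\to M$ meeting $L$ only at $\tau(0)$ (which exists by properness) and lets $U$ be the saturation of $\tau(0,1)$---but this amounts to the same thing as your $U^+$, and the paper likewise notes that the argument is then repeated on the other side.
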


\begin{proof} The proof uses Dippolito's Theorem and follows the proof of Lemma 4.4 of Ghys \cite{Gh}. Let $\tau: [0,1)\to M$ be a positive transversal that meets $L$ only in $\tau(0)$, and let $U$ be the saturation of $\tau(0,1)$. Then one of the leaves in the boundary of the completion $\widehat U$
is homeomorphic to $L$ and has trivial holonomy, so by Proposition \ref{extendedRS}, $L$ has a one-sided saturated product neighborhood. The same argument applies to the other side of $L$ to give the desired product neighborhood.
\end{proof}

\begin{proof}[Proof of Theorem A]
Let $\Omega$ be the union of all the leaves of $\Fol$ that are homeomorphic to $L$ and let $\Omega_1$ be the connected component of $\Omega$ that contains $L$. The previous lemma
shows that every leaf in $\Omega$ has an $\Fol$-saturated open neighborhood foliated as a product by the two foliations $\Fol$ and $\Nfol$.
These product neighborhoods fit together to show that $\Omega_1$
is an open saturated set that fibers over a Hausdorff manifold of dimension one, either an open interval or a circle,
with the leaves of $\Fol$ as fibers.  Consider the completion $\widehat\Omega_1$.

The proof of Lemma 4.5 of Ghys \cite{Gh} shows the following result.

\begin{lemma} \label{lemma:Ghys}
The set $\widehat{\Omega}_1$ is noncompact and $\partial \widehat{\Omega}_1$ is nonempty.
Every leaf of $\Fol$ contained in $\partial \widehat{\Omega}_1$ has an infinite cyclic holonomy group generated by a contraction.
\end{lemma}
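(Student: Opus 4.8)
The plan is to follow Ghys's argument (Lemma 4.5 of \cite{Gh}) closely, adapting it to the present setting. First I would show that $\widehat{\Omega}_1$ is noncompact. Suppose it were compact. Since $L$ is a proper leaf with trivial holonomy (established earlier in this section), Reeb stability applied to a finitely-repeating block of $L$ gives a product neighborhood, so $\Omega_1$ is an open saturated set fibering over a one-dimensional manifold with fiber $L$. If $\widehat{\Omega}_1$ were compact, then $\Omega_1$ would be \emph{all} of $M$ (a compact manifold has no proper nonempty open-and-closed saturated pieces once we know the boundary is empty), forcing $\Fol$ to be a fibration of $M$ over $S^1$ with fiber $L$; but then $L$ would be compact, contradicting that $L$ is a sum-manifold patterned on an infinite tree, hence noncompact. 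Alternatively, and more in line with Ghys, compactness of $\widehat{\Omega}_1$ together with Dippolito's structure theorem would force the base of the fibration to be a circle and the kernel $K$ to exhaust $\widehat{\Omega}_1$, again making $L$ compact. Either way we get a contradiction, so $\widehat{\Omega}_1$ is noncompact, and consequently $\partial\widehat{\Omega}_1\neq\emptyset$ (a noncompact completion of a saturated open set in a compact manifold must acquire boundary leaves).

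Next I would analyze a leaf $F\subset\partial\widehat{\Omega}_1$. Because $F$ lies in the boundary of the completion, approaching $F$ from inside $\widehat{\Omega}_1$ one sees leaves homeomorphic to $L$ accumulating on $F$; in particular $F$ is a limit of leaves homeomorphic to $L$ but $F$ itself is not in $\Omega_1$. The holonomy group of $F$ is a subgroup of $\mathrm{Homeo}^+([0,1))$ (using transverse orientability, which we have arranged). I would argue this holonomy is nontrivial: if it were trivial, Proposition \ref{extendedRS} (the extended Reeb stability of the excerpt) would give $F$ a two-sided product foliated neighborhood provided $F$ is itself a sum-manifold with blocks having trivial-or-torsion fundamental group --- and here one must check that $F$, being a limit of copies of $L$, inherits enough structure; in Ghys's setting this step uses that nearby leaves are homeomorphic to $L$ and a semicontinuity/stability argument identifies $F$ up to the relevant structure, so that $F$ too has trivial holonomy would imply $F\in\Omega$, contradicting $F\notin\Omega_1$ while $F$ is a limit from $\Omega_1$. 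Hence the holonomy of $F$ is nontrivial and, being a subgroup of germs at an endpoint, is generated by a single holonomy germ; transverse orientability makes it a subgroup of $\mathrm{Homeo}^+$, so it is either trivial or infinite cyclic generated by an element with no fixed points near the endpoint other than the endpoint itself, i.e.\ a contraction (or its inverse, an expansion --- passing to the appropriate side or inverse we may take it to be a contraction).

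The key steps, in order: (1) rule out compactness of $\widehat{\Omega}_1$ by showing it would force $L$ compact; (2) deduce $\partial\widehat{\Omega}_1\neq\emptyset$; (3) for $F\subset\partial\widehat{\Omega}_1$, use that $F$ is a limit of leaves homeomorphic to $L$ together with Proposition \ref{extendedRS} to exclude trivial holonomy; (4) use transverse orientability plus the one-dimensional structure of holonomy germs at an endpoint to conclude the holonomy group is infinite cyclic generated by a contraction. The main obstacle I expect is step (3): controlling the topology of the boundary leaf $F$ well enough to invoke the extended Reeb stability proposition --- one needs that $F$ is (homeomorphic to) a sum-manifold whose blocks have fundamental group generated by torsion or trivial, which is not automatic from $F$ merely being a limit of copies of $L$. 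In Ghys's original this is handled by the structure of the completion and the fact that the product neighborhoods of the $L$-leaves degenerate onto $F$ in a controlled way; I would reproduce that argument here, noting that since we only need the \emph{conclusion} about holonomy (not a full identification of $F$), it suffices to show trivial holonomy of $F$ would propagate the product structure back into $\Omega_1$ and force $F\in\Omega_1$, which contradicts $F\subset\partial\widehat{\Omega}_1$.
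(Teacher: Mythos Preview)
Your argument conflates two distinct claims: ``$\partial\widehat{\Omega}_1\neq\emptyset$'' and ``$\widehat{\Omega}_1$ is noncompact''. What you actually prove in step~(1) is only that $\Omega_1\neq M$ (equivalently, $\partial\widehat{\Omega}_1\neq\emptyset$): your parenthetical ``once we know the boundary is empty'' is never justified, and indeed $\widehat{\Omega}_1$ can be compact with nonempty boundary (think of a Reeb component). The paper proves the two claims \emph{separately and in the opposite order}. First it shows $\Omega_1$ is not compact by a minimal-set argument (your ``$L$ would be compact'' shortcut also works here and is simpler), giving $\partial\widehat{\Omega}_1\neq\emptyset$. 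Then it establishes the contraction holonomy on boundary leaves. Only \emph{after} that does it prove $\widehat{\Omega}_1$ noncompact, and this step genuinely uses the hypotheses of Theorem~A: the contraction holonomy produces a neighborhood $V$ of $\partial\widehat{\Omega}_1$ in which every block of $L$ contained in $V$ repeats infinitely; if $\widehat{\Omega}_1$ were compact, then $L\cap(\widehat{\Omega}_1\setminus V)$ would be compact, so by Proposition~\ref{cptfinite} only finitely many blocks of $L$ meet it, forcing some finitely-repeating block into $V$---a contradiction. You have no substitute for this argument.

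Your anticipated obstacle in step~(3) is real for the route you propose, but the paper sidesteps it entirely: it does \emph{not} apply Proposition~\ref{extendedRS} to $F$ and never needs $F$ to be a sum-manifold. Instead it works inside the Dippolito decomposition of $\widehat{\Omega}_1$ into kernel $K$ and branches $B_i\cong S_i\times[0,1]$. In each branch the interior leaves are all carried to one another by translation along $\Nfol$, so any loop in $F\cap B_i$ has globally trivial holonomy; on the compact piece $F\cap K$, assumed trivial holonomy gives a one-sided product neighborhood by ordinary Reeb stability. Gluing yields a product neighborhood of $F$ in $\widehat{\Omega}_1$, hence $F\cong L$, contradicting $F\subset\partial\widehat{\Omega}_1$. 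Finally, your step~(4) is incomplete: a nontrivial subgroup of $\mathrm{Homeo}^+([0,1))$ need not be infinite cyclic. The paper obtains cyclicity from the fact that the interior leaves of $\widehat{\Omega}_1$ are \emph{proper}, which forces the holonomy of $F$ to act discretely on the transversal; a discretely acting group of orientation-preserving interval homeomorphisms fixing only the endpoint is infinite cyclic generated by a contraction.
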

\begin{proof}
The set $\Omega_1$ is not compact. In fact, if $\Omega_1$ is compact, then $\Fol|_{\Omega_1}$ possesses a minimal set $\mu$. Let $F_0$ be a leaf of $\Fol|_{\Omega_1}$ in $\mu$. Then $F_0$ is dense in $\mu$, but given a point $x\in F_0$ there is a neighborhood $V$ of $x$ in $M$ that intercepts only one plaque of the leaf $F_0$, because $F_0$ is proper. Then $F_0$ is open in $\mu$, so $\mu\setminus F_0$ is closed and therefore $\mu\setminus F_0=\varnothing$. Then  $F_0$ is closed and therefore compact, but this is a contradiction. Therefore $\Omega_1$ is not compact and $\partial \widehat{\Omega}_1$ must be nonempty.

Let $F$ be a leaf contained in $\partial \widehat{\Omega}_1$ and
suppose that the holonomy group of $F$ is trivial, to get a contradiction. Since the leaves in $\Omega$ are homeomorphic
under translation along the transverse foliation $\Nfol$,
any loop in $F\cap B_i$ must have globally trivial holonomy
in $B_i$. In $F\cap K$ the compact set $F\cap N$ will have a
one-sided product foliated neighborhood, and hence $F$ must have a product neighborhood in $\widehat{\Omega}_1$ and will be homeomorphic to $L$, but since $F$ belongs to the boundary of $\widehat{\Omega}_1$ it is not homeomorphic to $L$. Hence the holonomy of $F$ must be
nontrivial. The leaves in the interior of $\widehat\Omega_1$ are proper, so the holonomy group of $F$ acts discretely; it must be infinite cyclic and one of its two generators will be a contraction.
Hence there is a small open neighborhood $V$ of $\partial \widehat\Omega_1$ in $\widehat\Omega_1$ such that every block of $L$ contained in $V$ repeats infinitely.

Suppose that $\widehat\Omega_1$ is compact and let $C=\widehat\Omega_1\setminus V$. Then $C\cap L$ is closed in $\widehat{\Omega}_1$, so it is a compact set contained in $L$. The proper leaf $L$ has the induced topology and so $C\cap L$ is compact in the topology of $L$.
By Proposition \ref{cptfinite} only finitely many blocks of $L$ meet $C\cap L$. Therefore some block of $L$ that repeats finitely in $L$ is contained in $V$, so it must repeat infinitely, which gives a contradiction.
\end{proof}

If $\Omega_1$ were fibered over an open interval
$(0,1)$, then it would be homeomorphic to a foliated product $L\times (0,1)$ and the completion would be $\widehat{\Omega}_1\approx L\times [0,1]$, which is impossible since the boundary leaves of
$\widehat{\Omega}_1$ are not homeomorphic to $L$.

Hence $\Omega_1$ must fiber over the circle. The remainder of our proof simplifies considerably the argument of Ghys in Section 5 of \cite{Gh}. Let $h:L\to L$ be the monodromy map that takes a point $x$ in $L$ to its first return to $L$ in the positive direction along the leaf of $\Nfol$ that contains $x$. The local product structure given by Lemma \ref{localprod} shows that $h$ is defined
on all of $\Omega_1$.
If a point $x$ of $L$ is in a branch $B_i$, then $h(x)>x$ on the interval $\{*\}\times[0,1]$ in the leaf of
$\Nfol$ that contains $x$, so $x$ is neither fixed nor periodic.
If $x$ is in a sufficiently small open neighborhood $V$ of $\partial\widehat \Omega_1$,
then again $h(x)>x$ and $x$ is neither fixed nor periodic.
Set $K'=K\setminus V$ and observe that
$K'\cap L$ is compact, so by Proposition \ref{cptfinite}, $K'$ meets finitely many blocks of $L$. Hence some block $C$ that repeats finitely on $L$ must be contained in $L\cap (V\cup \bigcup B_i)$, which contains no periodic points of $h$.
Therefore there is an integer $r>0$ such that the compact sets $h^{nr}(C)$, $n\in \Nset$, are pairwise disjoint, so $C$ must repeat infinitely. This contradiction completes the proof of Theorem A. \end{proof}

The manifold constructed by Ghys \cite{Gh} and similar constructions patterned on an arbitrary (locally finite) tree cannot be homeomorphic to leaves in codimension one; this follows from Theorem C. Inaba {\it et al.}  \cite{INTT} show that the manifolds that they construct are not $C^2$ diffeomorphic to leaves in codimension one, but Theorem C shows that they are not even homeomorphic to leaves in codimension one.
Theorem C also gives examples of non-leaves that mix blocks of various types---with finite fundamental groups or varying higher homotopy groups---provided that infinitely many blocks repeat finitely.

%%%%%%%%%%%%%%%%%%%%%%%%%%%%%%%%%%%%%%%%%%%%%%%%%%%%%%%%%%%%%%%%%%%%%%%%%%%%%%%%%%%%%%%%%%%%%%%%%%% Section 6
%%%%%%%%%%%%%%%%%%%%%%%%%%%%%%%%%%%%%%%%%%%%%%%%%%%%%%%%%%%%%%%%%%%%%%%%%%%%%%%%%%%%%%%%%%%%%%%%%%% References


\begin{thebibliography}{99}


\bibitem{AH} O. Attie, S. Hurder. {\it Manifolds which cannot be leaves of foliations.} Topology,  {\bf 35} (1996), 335--353.

\bibitem{CC} J. Cantwell, L. Conlon. {\it Every surface is a leaf.} Topology, {\bf 26} (1987), 265--285.

\bibitem{Di} P. Dippolito. {\it Codimension one foliations of closed manifolds.} Ann. of Math., 107, 1978. 403--453.

\bibitem{Gh} E. Ghys. {\it Une vari\'et\'e que n'est pas une feuille.} Topology, {\bf 24} (1984), 67--73.

\bibitem{HH}  G. Hector, U. Hirsch. {\it Introduction to the Geometry of Foliations. Part B.} Aspects Math., Fried. Vieweg \& Sohn, Braunschweig, 1983.

\bibitem{INTT} T. Inaba, T. Nishimori, M. Takamura, N. Tsuchiya. {\it Open manifolds which are non-realizable as leaves.} Kodai Math. J., {\bf 8} (1985) 112--119.

\bibitem{K} A.G. Kurosh. {\it The Theory of Groups}. Chelsea Publishing Company, New York (1956), 2 volumes.

\bibitem{M} J. Milnor. {\it A unique decomposition theorem for 3-manifolds.}  Amer. J. Math. {\bf 84} (1962) 1--7.

\bibitem{Sc1} P.A. Schweitzer, Surfaces not quasi-isometric to leaves of
foliations of compact $3$-manifolds. {\em Analysis and geometry in
foliated manifolds}, Proceedings of the VII International
Colloquium on Differential Geometry, Santiago de Compostela, 1994.
World Scientific, Singapore, (1995), 223--238.

\bibitem{Sc2} P.A. Schweitzer, {\it Riemannian manifolds not quasi-isometric to leaves in codimension one foliations}
Annales de l'Institut Fourier {\bf 61} (2011), 1599--1631, DOI 10.5802/aif.2653.

\bibitem{Sm} S. Smale. {\it On the structure of $5$-manifolds} Annals of Math. {\bf 75} (1962), 38--46.

\bibitem{So} J. D. Sondow. {\it When is a manifold a leaf of some foliation?} Bull. Amer. Math. Soc. {\bf 81} (1975), 622--625.

\bibitem{Sou} F. S. Souza. {\it Non-leaves of some foliations.} Doctoral Thesis, Pontif\'\i cia Universidade Cat\'olica do Rio de Janeiro, (2011) (in Portuguese). Available online at http://www2.dbd.puc-rio.br/pergamum/biblioteca/php/mostrateses.php?open=1\&arqtese=\\0710709\_11\_Indice.html

\bibitem{Z} A. Zeghib, An example of a $2$-dimensional no leaf.
{\em Geometric Theory of Foliations}, Proceedings of the 1993
Tokyo Foliations Symposium, World Scientific (Singapore, 1994),
475-477.

\end{thebibliography}
\end{document}